\documentclass[review]{siamltex}
\usepackage{latexsym, graphicx, epsfig, amsmath, amsfonts,amssymb,subfigure}
\usepackage{multirow}
\usepackage{color}
 \usepackage[percent]{overpic}

  \newtheorem{assumption}[theorem]{Assumption}
\usepackage{algorithm}
\usepackage{algorithmicx}
\usepackage{algpseudocode}
\floatname{algorithm}{Algorithm}
\usepackage{threeparttable}
\usepackage{booktabs}

\def\ds{\displaystyle}
\def\mb{\mathbf}
\def\R{\mathbb{R}}


\title{Adaptive multi-fidelity polynomial chaos approach to Bayesian inference in inverse problems}%

\author{Liang Yan\thanks{Department of Mathematics,  Southeast University, Nanjing, 210096, China (yanliang@seu.edu.cn). This author's work is partially supported by NSF of China (No.11771081), Qing Lan project of Jiangsu Province and the Southeast University's  Zhishan Young Scholars Program. }
          \and Tao Zhou\thanks{LSEC, Institute of Computational Mathematics, Academy of Mathematics and Systems Science, Chinese Academy of Sciences, Beijing 100190, China (tzhou@lsec.cc.ac.cn). This author's work is partially supported by the NSF of China (under grant numbers 11688101, 91630203, 11571351, and 11731006), the science challenge project (No. TZ2018001), NCMIS, and the youth innovation promotion association (CAS). }}
\begin{document}
\graphicspath{{figure/}}
\maketitle

\begin{abstract}
The polynomial chaos (PC) expansion has been widely used as a surrogate model in the Bayesian inference to speed up the Markov chain Monte Carlo (MCMC) calculations. However, the use of a PC surrogate introduces the modeling error, that may severely distort the estimate of the posterior distribution.  This error can be corrected by increasing the order of the PC expansion, but the cost for building the surrogate may increase dramatically.  In this work, we seek to address this challenge by proposing an adaptive procedure to construct a multi-fidelity PC surrogate. This new strategy combines (a large number of) low-fidelity surrogate model evaluations and (a small number of) high-fidelity model evaluations, yielding an adaptive multi-fidelity approach. Here the low-fidelity surrogate is chosen as the prior-based PC surrogate, while the high-fidelity model refers to the true forward model. The key idea is to construct and refine the multi-fidelity approach over a sequence of samples adaptively determined from data so that the approximation can eventually concentrate on the posterior distribution. We illustrate the performance of the proposed strategy through two nonlinear inverse problems. It is shown that the proposed adaptive multi-fidelity approach can improve significantly the accuracy, yet without a dramatic increase in  computational complexity. The numerical results also indicate that our new algorithm can enhance the efficiency by several orders of magnitude compared to a standard MCMC approach using only the true forward model.

\end{abstract}

\begin{keywords}
Bayesian inverse problems, multi-fidelity polynomial chaos, surrogate modeling, Markov chain Monte Carlo
\end{keywords}

\pagestyle{myheadings}
\thispagestyle{plain}
\markboth{LIANG YAN AND TAO ZHOU}
{ MULTIFIDELITY GPC FOR BAYESIAN INVERSE PROBLEMS}
%

\section{Introduction}
Inverse problems are found in many applications, such as heat transfer, geophysics, and medical imaging \cite{Evans+Stark2002, Kaipio+Somersalo2005}. One of the central questions in inverse problems  is to estimate the unknown parameters or inputs from a set of observations. Solutions to inverse problems are subject to many potential sources of error introduced by approximate mathematical models, noisy data, and limitations in the number of observations; thus it is important to include an assessment of the uncertainties as part of the solution. The Bayesian approach can provide a systematic framework for quantifying the uncertainty in the parameter estimation for inverse problems \cite{Kaipio+Somersalo2005,Stuart2010}.  In the Bayesian framework, the prior knowledge of the unknown parameters and the forward solver are combined to yield a {\it posterior} probability distribution of the model parameters. In this way, the unknown parameters can be characterized by their posterior distributions. Since the posterior is typically not of analytical form and cannot be easily interrogated, many numerical approaches such as Markov chain Monte Carlo (MCMC) methods \cite{Brooks2011} have been \textcolor{black}{applied}.  The major challenge of MCMC is the computational burden induced by the  repeated evaluations of the forward model;  when the model is computationally expensive, the Bayesian approach can quickly become computationally prohibitive.

Computational requirements of MCMC can be reduced by solving a related computationally cheap approximated model. This idea is implemented by surrogate modeling: rather than using the full-order or high-fidelity model of forward problems, one  constructs a surrogate of the high-fidelity model, and then to sample from the posterior distribution induced by this surrogate model. Specifically,  the surrogate or low-fidelity model is often constructed offline and subsequently used on-line when running the MCMC algorithm. If the surrogate is computationally less expensive, using it in MCMC can dramatically speed up the computational procedure.  Many surrogate modeling methods have been successfully employed in this context, for example,  projection-type reduced order models \cite{Arridge2006,Frangos+Marzouk+Willcox2010,Galbally+Fidkowski+Willcox+Ghattas2010,Jin2008fast,Lieberman+Willcox+Ghattas2010},  polynomial chaos (PC) \cite{Ma+Zabaras2009,Marzouk+Najm2009,Marzouk+Najm+Rahn2007,Marzouk+Xiu2009,yan+guo2015} and Gaussian process regression \cite{kennedy2001,Rasmussen2003,stuart+teckentrup2016}, to name a few.  Very recently, the theoretical analysis shows that if  the surrogate converges at a certain rate in the prior-weighted $L_2$ norm, then the posterior distribution generated by the approximation converges to the true posterior  at least two times faster \cite{yan+guo2015,Yan+Zhang2017IP}.  However,
constructing a sufficiently accurate surrogate model over the support of the prior distribution, may not be possible in many practical problems \cite{Li+Marzouk2014SISC}. In particular, when the posterior differs significantly from the prior, constructing such a globally accurate surrogate for high-dimensional, nonlinear problems  can, in fact, be a formidable task. On the other hand, MCMC with surrogate models requires that the surrogates doe not introduce large modeling errors. Or in other words, the surrogate posterior  defined by the low-fidelity may be very different from the 'exact' posterior  defined by the high-fidelity model. To improve this, one can use two-stage MCMC simulations that leverage low-fidelity models to speed up the sampling as in\cite{Christen2005markov,Efendiev2006preconditioning}, or one can continually refine the surrogate during MCMC sampling as in \cite{Conrad2018parallel,Conrad2016JASA,Cui2014data,Li+Marzouk2014SISC}. Another possible approach to improve this is to introduce an additional term that quantifies the error between the surrogate and the forward model, see e.g., \cite{Kaipio+Somersalo2005,manzoni2016accurate}. In this approach, the modeling error is modeled as an additive noise term in the Bayesian formulation of the inverse problem, and a low-cost predictor model is constructed using Monte Carlo sampling or statistical learning.

Different from previous works, we shall develop in this work an adaptive multi-fidelity PC (AMPC) approach for Bayesian inverse problems. The motivation for this study is two folds. On one hand, it is known \cite{lu2015JCP} that if the data contain information beyond what is assumed in the prior, then the \textcolor{black}{conventional PC-based approach} can lead to large errors or require an excessive computational cost. On the other hand, if the PC surrogate is constructed over the whole prior distribution, the accuracy of the surrogate posterior can generally not be guaranteed. To solve the above problems, our strategy is as follows:
\begin{itemize}
\item To enhance the accuracy and efficiency of the PC surrogate, we shall construct a multi-fidelity PC surrogate by combining  (possibly a much large number of) low-fidelity PC surrogate evaluations and ( a small number of) true model evaluations. Similar ideas have been proposed in different applications such as in \cite{Hampton2018practical,Ng2012multi,Palar2016multi,Peherstorfer2016survey}.

\item We propose a strategy that updates the surrogate adaptively by adopting newly available information during the computation procedure. By doing this, the numerical accuracy of the surrogate model is adaptively improved during the sampling process.
\end{itemize}
To the best of our knowledge, this is the first attempt for adaptive multi-fidelity PC approach for Bayesian inverse problems. Compared to the classical offline approaches where the PC surrogate is built in advance and kept changed during online computations, our strategy is to refine the surrogate model so that a more concentrated region in the parameter space is approximated adaptively. By focusing on the numerical accuracy in a more concentrated region, our multi-fidelity PC surrogate can dramatically enhance the  approximation accuracy, yet with a relatively lower PC expansion order. Thus, the overall computational complexity can be dramatically reduced.

The structure of the paper is as follows. In the next section, we shall review the formulation of Bayesian
inverse problems and the PC-surrogate based approach for inference accelerating. Section 3 contains the main results, we shall propose an adaptive multi-fidelity PC approach to speed up MCMC sampling. In section 4, we use two nonlinear inverse problems to demonstrate the accuracy and efficiency of the proposed method. We finally give some concluding remarks in Section 5.

\section {Background and problem setup}\label{sec:setup}
In this section, we first give a brief overview of the  Bayesian inverse problems. Then we will introduce a surrogate modeling approach via polynomial chaos expansion to accelerate the Bayesian inference.
\subsection{Bayesian inverse problems}

In this paper, we are interested in the problem of  estimating an unknown parameter $z\in \R^{n_z}$ from indirect observations $d\in \R^{n_d}$.  In the Bayesian framework, the prior belief about the parameter $z$ is encoded in the prior probability distribution $\pi(z)$, and  the  distribution of the $ z$  conditioned on the data $d,$ i.e., the posterior distribution $\pi(z|d)$ follows the Bayes' rule,
\begin{eqnarray}\label{ppdf}
\pi(z|d) \propto \pi(d|z) \pi(z),
\end{eqnarray}
where $\pi(d|z)$ is the likelihood function and $\propto$ denotes proportionality up to a scaling constant depends only on $d$ (but not on $z$). For notational convenience, we denote by $\pi^d(z)$ the posterior density $\pi(z|d)$ and by $L(z)$ the likelihood function $\pi(d|z)$. Then, (\ref{ppdf}) can be written as
\begin{eqnarray}\label{ppdf_1}
\pi^d(z)\propto L(z) \pi(z).
\end{eqnarray}
The likelihood $L(z)$ is constructed by the forward model from $Z$ to $d$:
\begin{eqnarray}\label{feq}
d = G(z)+e,
\end{eqnarray}
where the forward model $G$ yields predictions of the data as a function of parameters, and the components of $e$ are i.i.d. random variables that account the observation noises. Now, the likelihood $L(z)$ can be obtained by
\begin{equation}\label{likelihoodfun}
L(z)=\pi_e (d-G(z))=\ds\prod_{i=1}^{n_d}\pi_e(d_i-G_i(z)).
\end{equation}

Notice that if the forward model $G$ is nonlinear, the expression of the likelihood yields a posterior distribution that cannot be written in a closed form.  Standard MCMC methods, e.g. Metropolis-Hastings (MH) sampler, have been extensively used to sample such unknown posterior distributions. However, MCMC requires a large number of repeated evaluations of the likelihood function $L(z)$, and hence relied on repeated evaluations of the forward model $G$, which can be very expensive. Thus, it is of great importance  to construct a relatively cheaper ``surrogate" of the forward model in the offline \cite{Frangos+Marzouk+Willcox2010}. In the next section, we will review the most commonly used polynomial chaos (PC) surrogate approach \cite{ghanem_and_spanos_book,le2010spectral,xiu2010book}.

\subsection{Polynomial chaos expansions}
In order to simplify the discussion and without loss of generality,  in this section,  we describe the PC approximation to the forward model $G$ for $n_d=1$. When $n_d>1$, the procedure will be applied to each component of $G$.  We first assume that the components of the uncertain parameter vector $z=(z^{1},\cdots,z^{n_z})$ are mutually independent  and  $z^{i}$ has marginal probability density $\pi_i(z^{i}): \Gamma_i \rightarrow \mathbb{R}^{+}$. Then
$\pi(z)=\prod^{n_z}_{i=1}\pi_i(z^{i})$
is the joint probability density of the random vector $z$ with the support
$\Gamma:=\prod^{n_z}_{i=1}\Gamma_i \in \mathbb{R}^{n_z}.$

Polynomial chaos expansions represent the model output $G(z)$ as an expansion of orthonormal polynomials of random variables $z$
 \begin{eqnarray} \label{gpcexpansion}
G_N(z)=\sum_{\mb{m} \in \Lambda_N} c_{\mb{m}} \Phi_{\mb{m}}(z),
 \end{eqnarray}
where $\{c_{\mb{m}}\}$ are the unknown expansion coefficients, and the basis functions $\{\Phi_{\mb{m}}\}$ are orthonormal under the density $\pi$, that is,
\begin{eqnarray*}
(\Phi_{\mb{m}},\Phi_{\mb{n}})_{\pi}=\int_{\Gamma}\Phi_{\mb{m}}(z)\Phi_{\mb{n}}(z)\pi(z)dz=\delta_{\mb{m},\mb{n}}.
\end{eqnarray*}
The finite set of multi-indices $\Lambda_N $ is defined as
\begin{eqnarray}\label{mindex}
\Lambda_N :=\{\mb{m}\in \mathbb{N}^{n_z}_0: \|\mb{m}\|_1=\sum^{n_z}_{i=1}|m_i|\leq N\}.
\end{eqnarray}
This specifies a set of polynomials $\{\Phi_{\mb{m}}\}_{\mb{m}\in \Lambda_N}$ such that their total degree $\|\mb{m}\|_1$ is smaller than or equal to a chosen $N$ -- the so called total degree space. The degree of freedom of this space is
\begin{eqnarray}\label{tdterms}
\mbox{card} (\Lambda_N) := M= {n_z+N \choose n_z}.
\end{eqnarray}
 Eq. (\ref{gpcexpansion}) can also be written as
 \begin{eqnarray} \label{gpce}
G_{N}(z)=\sum_{\mb{m} \in \Lambda_N} c_{\mb{m}} \Phi_{\mb{m}}(z) = \sum^M_{m=1} c_m \Phi_m(z).
 \end{eqnarray}
 The above equation implicitly assumes a linear ordering of the elements:
 \begin{eqnarray*}
 \{\Phi_{\mb{m}}\}_{\mb{m} \in \Lambda_N} \Longleftrightarrow \{\Phi_m\}^M_{m=1}.
 \end{eqnarray*}
The main issue in using PC expansion is to efficiently evaluate the unknown coefficients $\{c_m\}$.  In this paper, we use weighted discrete least square method \cite{Narayan2014} to estimate these coefficients. In the discrete least square method (LSM) \cite{Cohen2013stability,Migliorati2013approximation,Tang2014discrete,Zhou2015weighted}, one first chooses a set of nodes $\Theta_{n_z}=\{z_i\}_{i=1}^Q\subset \Gamma$, where $Q\geq M$ is the number of nodes.
The standard LSM seeks to find the PCE coefficients by solving the optimization problem
\begin{eqnarray}
\Big\{c_m\Big\}^M_{m=1} = \arg \min_{c_m} \sum^Q_{i=1}\Big[\Big(G(z_i)-\sum^M_{m=1}c_m\Phi_m(z_i)\Big)\Big]^2.
\end{eqnarray}
This problem can be written algebraically
\begin{eqnarray}\label{lseq}
\mb{c} = \arg \min_{\mb{x} \in \R^M} \|\mb{\Phi x}-\mb{b}\|_2^2,
\end{eqnarray}
where $\mb{c} =(c_1,\cdots, c_M)^T$ denotes the vector of PC coefficients,  $\mb{\Phi}\in \R^{Q\times M}$ denotes the Vandermonde matrix with entries $\mb{\Phi}_{ij}=\Phi_j(z_i), \quad i=1,\cdots, Q, j=1,\cdots, M$, and $\mb{b}=(G(z_1),\cdots,G(z_Q))^T \in \R^Q$ is the vector of samples of $G(z)$.  One can also introduce weights in the least squares formulation. Let $\mb{W} =\mbox{diag} (w_1,\cdots,w_Q)$ be a diagonal matrix with positive entries, a weighted formulation can be written as
\begin{eqnarray}\label{Wlseq}
\mb{c} = \arg \min_{\mb{x} \in \R^M} \|\mb{\sqrt{\mb{W}}\Phi x}-\sqrt{\mb{W}}\mb{b}\|_2^2.
\end{eqnarray}
In this paper, we  consider this formulation with $w_i = \frac{M}{\sum^M_{m=1} \Phi_m^2(z_i)}$ \cite{Narayan2014} to construct the prior-based PC surrogate.

It well-known that the particular sampling strategies for the LSM can dramatically influence the accuracy of the expansion coefficients \cite{Hadigol2018least}. In this work, the samples $z_i, \, i=1,\cdots, Q$ are chosen as i.i.d. samples from a degree-asymptotic density inspired by \cite{Narayan2014}. Specifically, when $\pi(z)$ is uniform measure on $\Gamma=[-1, 1]^{n_z}$ we choose $z_i$ to be sampled from the tensor-product Chebyshev density, and when $\pi(z)$ is the Gaussian measure on $\R^{n_z}$ we choose  the sampling distribution of $z_i$ to have support on the $\R^{n_z}$ unit ball with radius $\sqrt{2N}$. We refer to \cite{Narayan2014} for details on this latter sampling density.

\subsection{PC-surrogate based MCMC sampling}
It is clear that after obtaining the approximation of $\{c_m\}$, one has an explicit functional form $\widetilde{G}_N$. We can then replace the froward model $G$ in (\ref{ppdf_1}) by its approximation $\widetilde{G}_N$, and obtain the surrogate posterior
\begin{eqnarray}\label{ppdf_surrogate}
\widetilde{\pi}^d_N(z)\propto \widetilde{L}_N(z) \pi(z),
\end{eqnarray}
where $\pi(z)$ is again the prior density of $Z$ and $\widetilde{L}_N$ is the approximate likelihood function defined as
\begin{eqnarray}\label{likelihood_su}
\widetilde{L}_N(z):=\ds\prod^{n_d}_{i=1}\pi_{e_i}(d_i-\widetilde{G}_{N,i}(z)),
\end{eqnarray}
where $\widetilde{G}_{N,i}$ is the $i$-th component of $\widetilde{G}_N$. The pseudocode to draw $m$ samples from the approximate posterior distribution $\widetilde{\pi}^d_N(z)$ using  Metropolis-Hastings method is given in Algorithm \ref{alg:MH}. There, a proposal density $q$ is used to draw the next candidate sample. We note that the performance of MCMC depends heavily on the choice of proposal density $q$, and a wide range of proposal densities can be used in Algorithm \ref{alg:MH}. The optimal choice of the proposal density is beyond the scope of this work.

\begin{algorithm}[t]
  \caption{Metropolis-Hastings algorithm using surrogate model.}
  \label{alg:MH}
  \begin{algorithmic}[1]
    \Require
      The surrogate  $\widetilde{G}_N$;
      a proposal density $q$, the number of samples $m$; and  a starting points $z_0$;
    \For {$i=1,\cdots, m$}
    \State Draw candidate $z^*$ from proposal $q(\cdot|z_{i-1})$, then evaluate the acceptance probability using the surrogate model $\widetilde{G}_N$

    \begin{equation*}
    \alpha(z_{i-1}, z^*) = \min \Big\{1, \frac{\widetilde{\pi}^d_N(z^*)q(z_{i-1}|z^*)}{\widetilde{\pi}^d_N(z_{i-1})q(z^*|z_{i-1})}\Big\}
    \end{equation*}
      \If  {Uniform $(0, 1] < \alpha(z_{i-1}, z^*)$ }    \State Accept $z^*$ by setting $z_i =z^*$
                \Else    \State Reject $z^*$ by setting $z_i =z_{i-1}$
                \EndIf
       \EndFor    \State
    \Return $z_1,\cdots, z_m$
  \end{algorithmic}
\end{algorithm}

The main advantage of the prior-based surrogate method is that, upon obtaining an accurate approximation $\widetilde{G}_N$, whose evaluation is inexpensive compared to the forward model $G$, the approximate posterior density $\widetilde{\pi}^d_N(z)$  can be evaluated for a large number of samples, without resorting to additional simulations of the forward problem.  However, simply replacing the full model $G$ with a surrogate model  $\widetilde{G}_N$  can lead to a bad approximation quality of MCMC solution. As in practice, one can only afford PC expansions with small or moderate PC orders  due to the computational complexity (see \cite{lu2015JCP} and Eq. (\ref{tdterms})). This can obviously introduce a possibly large model error unless the problem is well represented by a low-order PC. If the model error is large, then there might be an dramatic difference between the surrogate posterior and the true posterior \cite{lu2015JCP}. On the other hand, MCMC using the accuracy forward model simulation often leads to computational demands  that exceed available resources. To balance accuracy and efficiency, it is desirable to construct a multi-fidelity model to accelerate the solution of  MCMC, namely, one combining a small number of high-fidelity model evaluations (possibly the true model) and  a much larger number of low-fidelity model evaluations to construct a multi-fidelity surrogate. The key idea of the multi-fidelity approach is that the low-fidelity surrogate model (represented by $u^L$ thereafter) is leveraged for speedup while the true forward model (which is called the high-fidelity model and represented by $u^H$ thereafter) is kept in the MCMC to establish accuracy guarantees \cite{Peherstorfer2016survey}.

\section{Adaptive multi-fidelity polynomial chaos approach}\label{sec:method}
\subsection{Multi-fidelity PC based on LSM}

\begin{algorithm}[t]
  \caption{Multi-fidelity PC based on LSM}
  \label{alg:MPC}
  \begin{algorithmic}[1]
    \Require
    The low-fidelity model $u^L=\sum_{\mb{m}\in\Lambda_N} u^L_{\mb{m}} \Phi_{\mb{m}}(z)$;  the high-fidelity model $u^H$;   and  the order of $N_C$;
  \State   Choose Q sampling points $\{z_i\}$ in the parametric space
    \State Calculate the difference between the $u^H(z_i)$ and $u^L(z_i)$
    \State Compute the correction PC coefficients $u^C_{\mb{m}}$ using the least square method
    \State Build the multi-fidelity model by combining $u^L_{\mb{m}}$ and $u^C_{\mb{m}}$ using Eq. (\ref{multieq})
  \end{algorithmic}
\end{algorithm}

\textcolor{black}{ Recently,  many multi-fidelity approaches have been developed for uncertainty quantification \cite{Amsallem2012nonlinear,Hampton2018practical,Peherstorfer2016multifidelity,Peherstorfer2016survey,Zahr2015progressive}.  In this work, we focus our attention on the multi-fidelity polynomial chaos based on LSM \cite{Ng2012multi,Palar2016multi}.} Suppose that we have a high-fidelity model $u^H$ that estimates the output with very high accuracy (in this work we shall use the true forward model solver as the high-fidelity model $u^H $). We also define a low-fidelity model $u^L$ that estimates the same output with a lower accuracy than the high fidelity model $u^H $. The multi-fidelity approach was originally proposed to enhance the accuracy of a low-fidelity surrogate using supplementary observations of high-fidelity models.  The main idea  is to correct the low-fidelity simulation model using a correction term $C:$
\begin{equation}
C(z) = u^H(z)-u^L(z)\approx \sum_{\mb{m}\in \Lambda_{N_C}} u^C_{\mb{m}} \Phi_{\mb{m}}(z).
\end{equation}

Here the unknown coefficients of the additive correction terms $u^C_{\mb{m}} $ can be calculated by the least squares method. By solving the PC expansions of the correction term, a multi-fidelity model can be approximated via
\begin{equation}
u^M(z)= u^L(z)+C(z)\approx\sum_{\mb{m}\in\Lambda_N} u^L_{\mb{m}} \Phi_{\mb{m}}(z)+\sum_{\mb{m}\in \Lambda_{N_C}} u^C_{\mb{m}} \Phi_{\mb{m}}(z),
\end{equation}
where $u^L_{\mb{m}}$ and $u^C_{\mb{m}} $ are PC coefficients of the low-fidelity and the correction expansions, respectively.

Notice that the construction of $u^C_{\mb{m}}$ requires evaluations of the computationally expensive, the true forward model $u^H$, and therefore the number of this type of evaluations should be much less than the low-fidelity evaluations. Hence the indices of correction expansion must be a subset of low-fidelity expansion indices. For example, consider the total degree polynomial space, to construct an $N$th order multi-fidelity expansion, one can use an $N$-th order low-fidelity expansion combined with an $N_C$-th order ($N_C\leq N$) correction expansion.  The multi-fidelity PC expansion can then be expressed as
\begin{equation}\label{multieq}
u^M(z)=\sum_{\mb{m}\in \Lambda_N} u^L_{\mb{m}} \Phi_{\mb{m}}+\sum_{\mb{m}\in \Lambda_{N_C}} u^C_{\mb{m}} \Phi_{\mb{m}}=\sum_{\mb{m}\in \Lambda_{N_C}} (u^L_{\mb{m}}+u^C_{\mb{m}}) \Phi_{\mb{m}}+\sum_{\mb{m}\in \Lambda_N  \backslash \Lambda_{N_C} } u^L_{\mb{m}} \Phi_{\mb{m}},
\end{equation}
In this way, the multi-fidelity PC introduces an efficient PC approach where the lower-order indices of the low-fidelity PC coefficients are corrected through high-fidelity computations.  When $N_C=N$, all of coefficients will be corrected. The details are shown in Algorithm \ref{alg:MPC}.

Notice that if the multi-fidelity PC $u^M$ is constructed over the whole prior distribution (and kept unchanged), its accuracy for later procedures can generally not be guaranteed. While our main concern in Bayesian inference is the posterior distribution, and thus, it would  suffice to require that $u^M$ is accurate enough only in the posterior density region (while no need to ensure its accuracy everywhere). However, estimation of the high-probability posterior density region is nontrivial as the solution of the inverse problem is unknown until data are available.  Consequently, we shall propose below an adaptive strategy that makes use of immediate data to adaptively build the multi-fidelity surrogate.

\begin{algorithm}[th]
  \caption{Adaptive multi-fidelity PC-based MH algorithm}
  \label{alg:AMPC}
  \begin{algorithmic}[1]
    \Require
  Given the subchain length $m$, the maximum allowable number of iterations $I_{max}$,  the upper threshold $\epsilon_0$, the error threshold $\epsilon$, an initial radius $R$ and a constant $\rho$. The prior-based PC surrogate  $\widetilde{G}_N$, the high-fidelity model $u^H=G$ and  a proposal density $q$.
 \State  Choose a starting points $z_0$; let $u^L = \widetilde{G}_N$; let $X_0=\{\}$;
  \For {$n=1,\cdots, I_{max}$}
   \State Draw $m-1$ samples $\{z_1,\cdots,z_{m-1}\}$ from the approximate posterior based on $u^L$ using Algorithm \ref{alg:MH}
   \State Propose $z^*\sim q(\cdot|z_{m-1})$, then evaluation the acceptance probability using high-fidelity model $u^H$
    \begin{equation*}
    \alpha= \min \Big\{1, \frac{\pi^d(z^*)q(z_{m-1}|z^*)}{\pi^d(z_{m-1})q(z^*|z_{m-1})}\Big\}
    \end{equation*}
   \State Set $y=z^*$  with probability $\alpha$, else set $y=z_{m-1}$
      \State Compute the relative error $err(y) $ using Eq. (\ref{conerr})
      \If {$err(y) > \epsilon$}
      \State  Select $Q$ random points $\{x_i\} \in B(y,R)$  and construct the multi-fidelity model $u^M$ using Algorithm \ref{alg:MPC}
      \State Set $u^L =u^M$. If $err(y) \leq \epsilon_0$, set $R=\rho R$
    \EndIf
   \State Evaluation the acceptance probability using  $u^L$
   \begin{equation*}
    \beta= \min \Big\{1, \frac{\widetilde{\pi}^d_N(z^*)q(z_{m-1}|z^*)}{\widetilde{\pi}^d_N(z_{m-1})q(z^*|z_{m-1})}\Big\}
   \end{equation*}
   \If  {Uniform $(0, 1) < \beta$ }    \State Accept $z^*$ by setting $z_m =z^*$
                \Else    \State Reject $z^*$ by setting  $z_m = z_{m-1}$
    \EndIf
   \State Let $z_0 =z_m$ and $X_n=X_{n-1} \bigcup \{z_1,\cdots,z_m\}$
   \EndFor
   \State
    \Return Posterior samples $ X_{I_{max}}$
  \end{algorithmic}
\end{algorithm}

\subsection{Adaptive multi-fidelity PC-based MCMC sampling}
In this section, we shall propose our adaptive sampling framework for constructing the multi-fidelity PC surrogate. Our strategy contains the following steps:
\begin{itemize}
\item  Initialization:  set an initial  surrogate $u^L= \widetilde{G}_N$ where $ \widetilde{G}_N$ is a prior-based PC surrogate.

\item  With the surrogate $u^L$, we can efficiently run MCMC to sample the approximated posterior distribution for a certain number of steps to get the samples $\{z_1,...,z_{m-1}\}$.  Then, the last state $z_{m-1}$ will be used \textcolor{black}{to propose a candidate $z^*$.
\item  If  the surrogate model $u^L$ needs refinement near $z^*$ or $z_{m-1}$, then select new points to construct multi-fidelity model $u^M$ using Algorithm \ref{alg:MPC}.  Set $u^L=u^M$.
\item  Use the surrogate model $u^L$ to accept/reject the proposal  $z^*$.
}
\item The process will be further repeated $I_{max}$ times.  Then the posterior samples can  be generated by gathering all the samples in the above procedures.
\end{itemize}

The detailed algorithm for the above adaptive approach is summarized in Algorithm \ref{alg:AMPC}.  
\textcolor{black}{Choosing when and where to refine the surrogate model $u^L$ is critical to the performance of the adaptive multi-fidelity PC-based MH algorithm. To this end,  in line 3 of Algorithm \ref{alg:AMPC}, we first sample the approximated posterior distribution based on the surrogate model $u^L$ for a certain number of steps using the standard MH algorithm.} The goal is to generate $m-1$ samples so that the initial sample points and the last point are uncorrelated.  In  line 4,  the last sample points $z_{m-1}$ is used to  propose a candidate $z^*$. \textcolor{black}{To decide where to refine the surrogate model, we  compute the  acceptance probability  based on the high-fidelity model $u^H$ (the true forward model).}  In doing this, we can obtain a new accept parameter $y$, which is expected to be much closer to the posterior region than other samples. \textcolor{black}{Next, we decide whether we need to refine the surrogate model at the point $y$.  In line 6, we evaluate the surrogate models and compute  the following absolute error
\begin{equation}\label{conerr}
err(y) = \|u^H(y)-u^L(y)\|_{\infty}.
\end{equation}
Lines 7-10 describe the adaptation of the construction of multi-fidelity surrogate at $y$, which is controlled by the error $err(y)$ and a given threshold $\epsilon$.}  When the  error is less than $\epsilon$, we suppose that the surrogate model is accurate enough and thus it is used directly to accept/reject the proposal. Otherwise, we shall update the current surrogate model.  When refinement of the surrogate model  is needed at the point $y$, we choose $Q$ random points $\{x_i\}$ in a ball centered at $y$,  i.e., $x_i\in B(y,R):=\Big\{x: \|x-y\|_{\infty} \leq R\Big\}$.  The number of points $Q$ depends on the order of the correction expansions $N_C.$ For instance, if $u^M$ is constructed in the total degree space, we shall use $Q = 2 * {N_C+n_z \choose n_z}$ points to compute the expansion coefficients. The choice of the radius $R$ is also a crucial part. Notice that at early iterations, the sample point $y$ might be far away from the high posterior probability region, and thus we could use a relatively large value of $R$. While at later iterations, the refinement procedure will push the samples to approach the high probability density regions, meaning that we should use a relatively small $R.$ This motivates us to introduce a constant $\rho \leq 1$ to control the radius $R$ in line 9. \textcolor{black}{To this end, we introduce an upper threshold $\epsilon_0$. If the error indicator $err$ is smaller than $\epsilon_0$, we let $R=\rho R$.}    In our numerical experiments, the $Q$ random points for each iteration are chosen as $x_i = y+ R \xi_i,$ where $\xi_i \sim U(-1,1).$ \textcolor{black}{To keep the ergodicity of Algorithm \ref{alg:AMPC}, in lines 11-16 we used the surrogate  to accept/reject the proposal $z^*$. } This process will be further repeated $I_{max}$ times.  In this way, a small number of high-fidelity model evaluations are used to update the multi-fidelity PC surrogate, and the evaluations are expected to locate within the high probability region of the inference problem, thus enhance the efficiency whenever the posterior is concentrated.

\textcolor{black}{
\subsection{Theoretical results}
In this section, we analyze the error bound introduced in the exact posterior $\pi^d$ when we use a surrogate model $u^L$ to approximate the high-fidelity model $u^H$. One important result, already proved in \cite{Cui2014data}, given a bound on the Hellinger distance between the two distributions.  In this work, we will focus on bounding  the Kullback-Leibler(KL) divergence which  is defined as
$$D_{KL}(\widetilde{\pi}^d_N||\pi^d):=\int_{\Gamma}\widetilde{\pi}^d_N \log \frac{\widetilde{\pi}^d_N}{\pi^d}dz.$$
}

\textcolor{black}{
We require some notations before stating the result.  First, we define the $\epsilon$-feasible set $\Gamma_N(\epsilon)$ and the associated posterior measure as \cite{Cui2014data} 
\begin{equation}
\Gamma_N(\epsilon) = \Big\{y\in \Gamma \arrowvert \|u^H(y)-u^L(y)\|_{\infty} \leq \epsilon \Big\},
\end{equation}
and
\begin{equation}
\mu\Big(\Gamma_N(\epsilon)\Big)=\int_{\Gamma_N(\epsilon)}\pi^d(z)dz.
\end{equation}
The complement of the $\epsilon$-feasible set is given by $\Gamma^{\perp}_N(\epsilon)=\Gamma \setminus \Gamma_N(\epsilon)$, which has posterior measure $\mu\big(\Gamma^{\perp}_N(\epsilon)\big)=1-\mu\big(\Gamma_N(\epsilon)\big)$.
}

\textcolor{black}{
In this work, the framework set by \cite{Yan+Zhang2017IP} is adapted to analyze the Kullback-Leibler distance between the exact posterior and its approximation induced by the $u^L$ constructed in Algorithm \ref{alg:AMPC}. Following \cite{Yan+Zhang2017IP}, we start with an assumption on the high-fidelity model $u^H$.
\begin{assumption}\label{a1}
The forward operator $u^H:\Gamma \rightarrow \mathbb{R}^{n_d}$ satisfies the following: 
$$\sup_{z\in \Gamma}\|u^H(z)\|:=C_H < \infty.$$
\end{assumption}
}

\textcolor{black}{
Our main convergence result is formalized in the  below.
\begin{theorem}\label{t1}
Assume the functions $u^H$ and $u^L$ satisfy Assumption \ref{a1} uniformly in $N$, and the observational error has an i.i.d. Gaussian distribution. Suppose we have the full posterior distribution $\pi^d$ and its approximation $\tilde{\pi}^d_N$ induced by the surrogate model $u^L$. For a given $\epsilon> 0$, there exist constants $K_1 >0$ and $K_2 >0$ such that
\begin{equation}\label{KLbound}
D_{KL}(\tilde{\pi}^d_N||\pi^d)\leq \Big(K_1 \epsilon+K_2\mu\big(\Gamma^{\perp}_N(\epsilon)\big)\Big)^2.
\end{equation}
\end{theorem}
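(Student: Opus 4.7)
The plan is to expand $D_{KL}(\tilde{\pi}^d_N\|\pi^d)$ explicitly in terms of the Gaussian potentials and normalization constants, derive a pointwise Lipschitz estimate for the potential difference that lives naturally on the $\epsilon$-feasible decomposition, and finally combine the two contributions. Writing $L(z)=\exp(-\Phi(z))$ and $\tilde L_N(z)=\exp(-\tilde\Phi_N(z))$ with $\Phi(z)=\tfrac{1}{2\sigma^2}\|d-u^H(z)\|^2$ and $\tilde\Phi_N$ analogous, the normalizers $Z=\int e^{-\Phi}\pi\,dz$ and $\tilde Z_N=\int e^{-\tilde\Phi_N}\pi\,dz$ are both bounded above and, by Assumption~\ref{a1}, below by strictly positive constants depending only on $C_H$, $\|d\|$ and $\sigma$. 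With this, the divergence decomposes as
\begin{equation*}
D_{KL}(\tilde\pi^d_N\|\pi^d)=\mathbb{E}_{\tilde\pi^d_N}[\Phi-\tilde\Phi_N]+\log(Z/\tilde Z_N),
\end{equation*}
reducing the problem to controlling the two right-hand terms.

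Next I would derive the key pointwise estimate. Expanding the difference of squares gives
\begin{equation*}
|\Phi(z)-\tilde\Phi_N(z)|\le\tfrac{1}{2\sigma^2}\bigl(2\|d\|+\|u^H(z)\|+\|u^L(z)\|\bigr)\,\|u^H(z)-u^L(z)\|_{\infty},
\end{equation*}
so Assumption~\ref{a1} yields $|\Phi-\tilde\Phi_N|(z)\le C_1\,\|u^H(z)-u^L(z)\|_\infty$ with $C_1=C_1(C_H,\|d\|,\sigma)$. The definition of $\Gamma_N(\epsilon)$ then splits this as
\begin{equation*}
|\Phi-\tilde\Phi_N|(z)\le C_1\epsilon\,\mathbf{1}_{\Gamma_N(\epsilon)}(z)+2C_1 C_H\,\mathbf{1}_{\Gamma_N^{\perp}(\epsilon)}(z).
\end{equation*}

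Using this split plus the boundedness of the likelihood ratio (which lets me swap the prior and posterior measures of $\Gamma_N^\perp(\epsilon)$ up to a constant), each of the two building blocks can be bounded: the expectation gives $\mathbb{E}_{\tilde\pi^d_N}[|\Phi-\tilde\Phi_N|]\le C_1\epsilon+C_2\mu(\Gamma_N^{\perp}(\epsilon))$, and the normalizer difference, controlled via $|e^{-\Phi}-e^{-\tilde\Phi_N}|\le|\Phi-\tilde\Phi_N|$ together with the lower bound on $\tilde Z_N$, yields $|\log(Z/\tilde Z_N)|\le C_1'\epsilon+C_2'\mu(\Gamma_N^{\perp}(\epsilon))$. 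Adding the two produces a linear bound of the form $K_1\epsilon+K_2\mu(\Gamma_N^{\perp}(\epsilon))$.

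The main obstacle is to upgrade this linear bound to the squared form in \eqref{KLbound}. My plan is to exploit the fact that KL divergence between two close densities is locally \emph{quadratic}: the first-order contribution $\mathbb{E}_{\tilde\pi^d_N}[\Phi-\tilde\Phi_N]+\log(Z/\tilde Z_N)$ cancels to leading order because of the normalization constraint, leaving a remainder controlled by $\mathbb{E}_{\tilde\pi^d_N}[(\Phi-\tilde\Phi_N)^2]$ (equivalently, bound $D_{KL}$ by the $\chi^2$-divergence and then by a squared $L^2$ norm of the potential difference). Squaring the pointwise splitting of $|\Phi-\tilde\Phi_N|$ and integrating against $\tilde\pi^d_N$ gives exactly $\bigl(K_1\epsilon+K_2\mu(\Gamma_N^{\perp}(\epsilon))\bigr)^2$, as in \eqref{KLbound}. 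Making this cancellation precise — in particular tracking the implicit constants so that they really assemble into a single squared parenthesis rather than a sum of a square and a linear term — is the delicate step I expect to need the most care, and is where I would rely most directly on the framework of \cite{Yan+Zhang2017IP}.
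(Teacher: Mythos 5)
Your proposal is correct in substance but reaches the crucial ``squaring'' step by a genuinely different route from the paper. The paper symmetrizes: it bounds $D_{KL}(\tilde{\pi}^d_N\|\pi^d)$ by $D_{KL}(\tilde{\pi}^d_N\|\pi^d)+D_{KL}(\pi^d\|\tilde{\pi}^d_N)=\int(\tilde{\pi}^d_N-\pi^d)\log(\tilde{\pi}^d_N/\pi^d)\,dz$, which splits into two terms $I_1$ and $I_2$, each a \emph{product} of two first-order-small quantities --- $|\gamma-\widetilde{\gamma}_N|$ times $\int_{\Gamma}\mathcal{L}\,|\Psi-\widetilde{\Psi}_N|\,\pi\,dz$, and $|\widetilde{\mathcal{L}}_N-\mathcal{L}|$ times $|\Psi-\widetilde{\Psi}_N|$ pointwise --- with each factor controlled by precisely your linear estimate (Lemma \ref{lemmaA1}, imported from Cui et al.). You instead keep the one-sided divergence, write $D_{KL}(\tilde{\pi}^d_N\|\pi^d)=\mathbb{E}_{\tilde{\pi}^d_N}[g]+\log\mathbb{E}_{\tilde{\pi}^d_N}[e^{-g}]$ with $g=\Psi-\widetilde{\Psi}_N$ (your $\Phi-\tilde{\Phi}_N$), and invoke the second-order expansion of the log-moment generating function; since $|g|$ is uniformly bounded under Assumption \ref{a1}, this yields $D_{KL}(\tilde{\pi}^d_N\|\pi^d)\le\tfrac12 e^{\sup|g|}\,\mathbb{E}_{\tilde{\pi}^d_N}[g^2]$ with exact cancellation of the first-order terms, so the step you flag as delicate does go through with nothing more than boundedness of the potentials. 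Your route is arguably cleaner: it avoids the reverse divergence and the separate normalizing-constant lemma. One correction to your final claim, however: squaring the indicator decomposition does \emph{not} produce $\big(K_1\epsilon+K_2\mu(\Gamma^{\perp}_N(\epsilon))\big)^2$, because $\mathbf{1}_{\Gamma^{\perp}_N(\epsilon)}^2=\mathbf{1}_{\Gamma^{\perp}_N(\epsilon)}$; the complement region therefore contributes $\mu\big(\Gamma^{\perp}_N(\epsilon)\big)$ to the \emph{first} power, and what you actually obtain is $K_1\epsilon^2+K_2\mu\big(\Gamma^{\perp}_N(\epsilon)\big)$. This is not a defect relative to the paper: its own bound on $I_2$ is $c_3\epsilon^2+c_4\mu\big(\Gamma^{\perp}_N(\epsilon)\big)$, and the final inequality absorbing that linear term into the perfect square is equally loose as $\epsilon,\mu\to0$. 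Both arguments genuinely establish the form $K_1\epsilon^2+K_2\mu\big(\Gamma^{\perp}_N(\epsilon)\big)$, which is what the discussion following the theorem actually uses.
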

\begin{proof}
See  Appendix for the detailed proof.
\end{proof}
}

\textcolor{black}{
Notice that if the approximation is accurate, the expression in (\ref{KLbound}) will be small. Specifically, if the posterior distribution is sufficiently well sampled such that 
\begin{equation}\label{cond1}
\mu\Big(\Gamma^{\perp}_N(\epsilon)\Big)\leq \epsilon,
\end{equation}
 then the KL distance is characterized entirely by $\epsilon^2$. Thus, by adaptively updating the multi-fidelity model until condition (\ref{cond1}) is satisfied, we can build an approximate posterior distribution whose error, measured by Kullback-Leibler divergence, can be bounded in terms of  $\epsilon^2$, as shown in Theorem \ref{t1}.   In Algorithm \ref{alg:AMPC}, if a  candidate  point $y\in \Gamma^{\perp}_N(\epsilon)$,  we will select new points at a ball $B(y, R)$ to construct multi-fidelity model using Algorithm \ref{alg:MPC}.  In this way, the posterior measure of $\mu\big(\Gamma^{\perp}_N(\epsilon)\big)$ decays asymptotically with refinement of the surrogate model, and  the adaptive construction process can search the parameter space more thoroughly to increase the likelihood that the condition (\ref{cond1}) is satisfied. 
 }
 
 \textcolor{black}{
 According to the relation between the  Hellinger distance and Kullback-Leibler distance, Theorem \ref{t1} can provide an error bound between $\tilde{\pi}^d_N$ and $\pi^d$ in the Hellinger distance. 
 \begin{corollary}[\cite{Cui2014data}]\label{t2}
Assume the functions $u^H$ and $u^L$ satisfy Assumption \ref{a1}, and the observational error has an i.i.d. Gaussian distribution. Suppose we have the full posterior distribution $\pi^d$ and its approximation $\tilde{\pi}^d_N$ induced by the surrogate model $u^L$. For a given $\epsilon> 0$, there exist constants $K_1 >0$ and $K_2 >0$ such that
\begin{equation*}
D_{Hell}(\tilde{\pi}^d_N||\pi^d)\leq K_1 \epsilon+K_2\mu\big(\Gamma^{\perp}_N(\epsilon)\big).
\end{equation*}
\end{corollary}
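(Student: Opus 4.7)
The plan is to deduce Corollary \ref{t2} directly from Theorem \ref{t1} by invoking the standard inequality between the Hellinger metric and the Kullback--Leibler divergence, rather than redoing the entire splitting argument. Recall that for any two probability densities $p$ and $q$ with common support one has $D_{Hell}^2(p,q)\leq \tfrac{1}{2}D_{KL}(p\|q)$; this can be shown by writing $1-\sqrt{q/p}\leq -\tfrac{1}{2}\log(q/p)$ and integrating against $p$, and it is the form of Pinsker's inequality that couples exactly to the KL quantity already controlled in Theorem \ref{t1}.

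First I would state this KL--Hellinger inequality with $p=\tilde{\pi}^d_N$ and $q=\pi^d$, noting that both densities are strictly positive on $\Gamma$ under Assumption \ref{a1} and the Gaussian noise hypothesis, so the inequality applies without restriction. Then I would apply Theorem \ref{t1} to bound the right-hand side, which yields
\begin{equation*}
D_{Hell}^2(\tilde{\pi}^d_N,\pi^d)\leq \tfrac{1}{2}D_{KL}(\tilde{\pi}^d_N\|\pi^d)\leq \tfrac{1}{2}\bigl(K_1\epsilon+K_2\mu(\Gamma^{\perp}_N(\epsilon))\bigr)^2,
\end{equation*}
with the same constants $K_1,K_2$ delivered by Theorem \ref{t1}. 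Taking square roots preserves the sub-additive structure of the bound, and absorbing the factor $1/\sqrt{2}$ into the constants gives the inequality stated in the corollary.

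An alternative route, more in the spirit of the proof of Theorem \ref{t1}, is to bound the Hellinger distance \emph{directly} by splitting $\Gamma=\Gamma_N(\epsilon)\cup\Gamma^{\perp}_N(\epsilon)$, using the elementary inequality $|\sqrt{a}-\sqrt{b}|\leq \sqrt{|a-b|}$ together with a Taylor expansion of the Gaussian likelihood to get an $O(\epsilon)$ contribution on the feasible set, and using Assumption \ref{a1} to bound the (bounded) Hellinger integrand pointwise on the complement, yielding the $O(\mu(\Gamma^{\perp}_N(\epsilon)))$ contribution. I would flag this only as a cross-check; the first approach is shorter and avoids re-deriving the normalization-constant estimates.

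The only real obstacle is ensuring that the normalization constants of $\pi^d$ and $\tilde{\pi}^d_N$ are bounded away from zero uniformly in $N$, which is needed to apply the KL--Hellinger inequality meaningfully; this however already follows from Assumption \ref{a1}, since the likelihoods are bounded below on $\Gamma$ by a positive constant depending only on $C_H$, $\|d\|$ and the noise covariance. With this observation in hand, the corollary follows immediately.
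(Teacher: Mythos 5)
Your proposal is correct and follows essentially the same route as the paper, which derives the corollary from Theorem \ref{t1} via the standard inequality $D_{Hell}^2(\tilde{\pi}^d_N,\pi^d)\leq \tfrac{1}{2}D_{KL}(\tilde{\pi}^d_N\|\pi^d)$ and then takes square roots, absorbing the factor $1/\sqrt{2}$ into the constants. Your remarks on the positivity of the normalizing constants match the bounds $\gamma\geq\exp(-C_\Psi)$ and $\widetilde{\gamma}_N\geq\exp(-C_{\widetilde{\Psi}_N})$ already established in the proof of Theorem \ref{t1}, so nothing further is needed.
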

 }

\section{Numerical Examples}\label{sec:tests}

In this section, we present two practical PDE inverse problems to illustrate the accuracy and efficiency of the adaptive multi-fidelity PC approach. The first example is adapted from \cite{Li+Marzouk2014SISC,Marzouk+Najm+Rahn2007,yan+guo2015}, where a 2D heat source inversion problem is considered. The parametric dimension of this problem is set as $n_z=2$, and this allows us to use higher polynomial orders to construct an accurate prior-based PC surrogate. Although in this case, our AMPC approach does not significantly reduce the computation time (compared to the prior-based PC method), it does offer significant improvement in  accuracy. The second example is the inferring of the diffusion coefficient in an elliptic PDE with nine random parameters \cite{Cui2014data}. This is a more challenging problem, since constructing a global accurate prior-based PC surrogate is very expensive due to the high dimension of the parametric space. We shall use this example to show that our approach can dramatically enhance both the efficiency and the accuracy, compared to the prior-based PC method.

In all our tests, unless otherwise specified, we shall use the following parameters  $N_C= 2, R=0.1, \rho=0.5, \epsilon = 1\times 10^{-3}, \epsilon_0 =  0.1$.   We shall run the AMPC approach using Algorithm \ref{alg:AMPC} for $I_{max} =10$ iterations, with a subchain length $m=5,000$.  We shall  provide comparisons between AMPC and the prior-based PC approach \cite{Yan+Zhang2017IP}. To make a fair comparison, we shall run the prior-based PC approach for $50,000$ iterations. Moreover, the MCMC simulation results using the high-fidelity model (referred as the conventional MCMC approach) will also be conducted, and this is used as the reference solution to evaluate the accuracy and efficiency of the two methods (AMPC and the prior-based PC approach). For all algorithms, the same Gaussian proposal distribution will be used.  To present the numerical results, the last $30,000$ realizations will be used to compute the relevant statistical quantities. All the computations are performed using MATLAB 2015a on an Intel-i5 desktop computer.

\subsection{Example 1: 2D heat source inversion}
Consider the following model in the physical domain $D=[0,1]\times [0,1]$
\begin{eqnarray}\label{2dsource}
\begin{array}{rl}
^cD_t^{\alpha}u-\nabla ^2 u&=e^{-t}\exp\Big[-0.5\Big(\frac{\|Z-\mathbf{x}\|}{0.1}\Big)^2\Big],\quad D\times [0, 1],\\
\nabla u \cdot \textbf{n}&=0, \quad \mbox {on} \,\,\, \partial{D},\\
u(\textbf{x},0)&=0, \quad  \mbox{in}\,\,\, D.
 \end{array}
\end{eqnarray}
The goal is to determine the source location $Z=(Z_1,Z_2)$ from noisy measurements of the $u$-field at a finite set of locations and times.  Here $^c D^{\alpha}_t \,(0<\alpha<1)$ denotes the Caputo fractional derivative of order $\alpha$ with respect to $t$  and it is defined by \cite{Podlubny1999}
\begin{eqnarray*}
^cD_t^{\alpha}u(\mathbf{x},t)=\frac{1}{\Gamma(1-\alpha)} \int ^t_0 \frac{\partial{u(\mathbf{x},\eta)}}{\partial{\eta}} \frac{d\eta}{(t-\eta)^{\alpha}}, \,\,\, 0<\alpha<1,
\end{eqnarray*}
where $\Gamma (\cdot)$ is the Gamma function. Here the Caputo fractional derivative of order $\alpha$ is chosen as 0.8.

In the numerical simulation, we shall use a finite difference/ spectral approximations (\cite{Lin+Xu2007}) with time step $\Delta t=0.01$ and polynomial degree $P=6$ to solve the equation (\ref{2dsource}).  Then, the simulation data are generated by adding independent random noise $N(0,\sigma^2)$ to the deterministic simulation results at a uniform $3 \times 3$ sensor network. At each sensor location, two measurements are taken at time $t = 0.25$ and $t = 0.75$, which corresponds to a total of  18 measurements.  The prior on $Z$ reflects a uniform probability assignment over the entire domain of possible source locations, i.e., $Z_i\sim U(0,1)$.   Note that the standard deviation  $\sigma$ may be difficult to quantify directly, especially when the experiment for data acquisition is not repetitive. In this section,  the noise level $\sigma$ is assumed to be unknown and an inverse Gamma distribution with parameters $(1\times 10^{-3}, 1\times 10^{-3})$ is assumed  for $\sigma^2$.

In order not to commit an `inverse crime', we generate the data by solving the forward problem at a much finer mesh than that is used in the inversion ($P=10$).

\begin{figure}
\begin{center}
  \begin{overpic}[width=0.45\textwidth,trim=20 0 20 15, clip=true,tics=10]{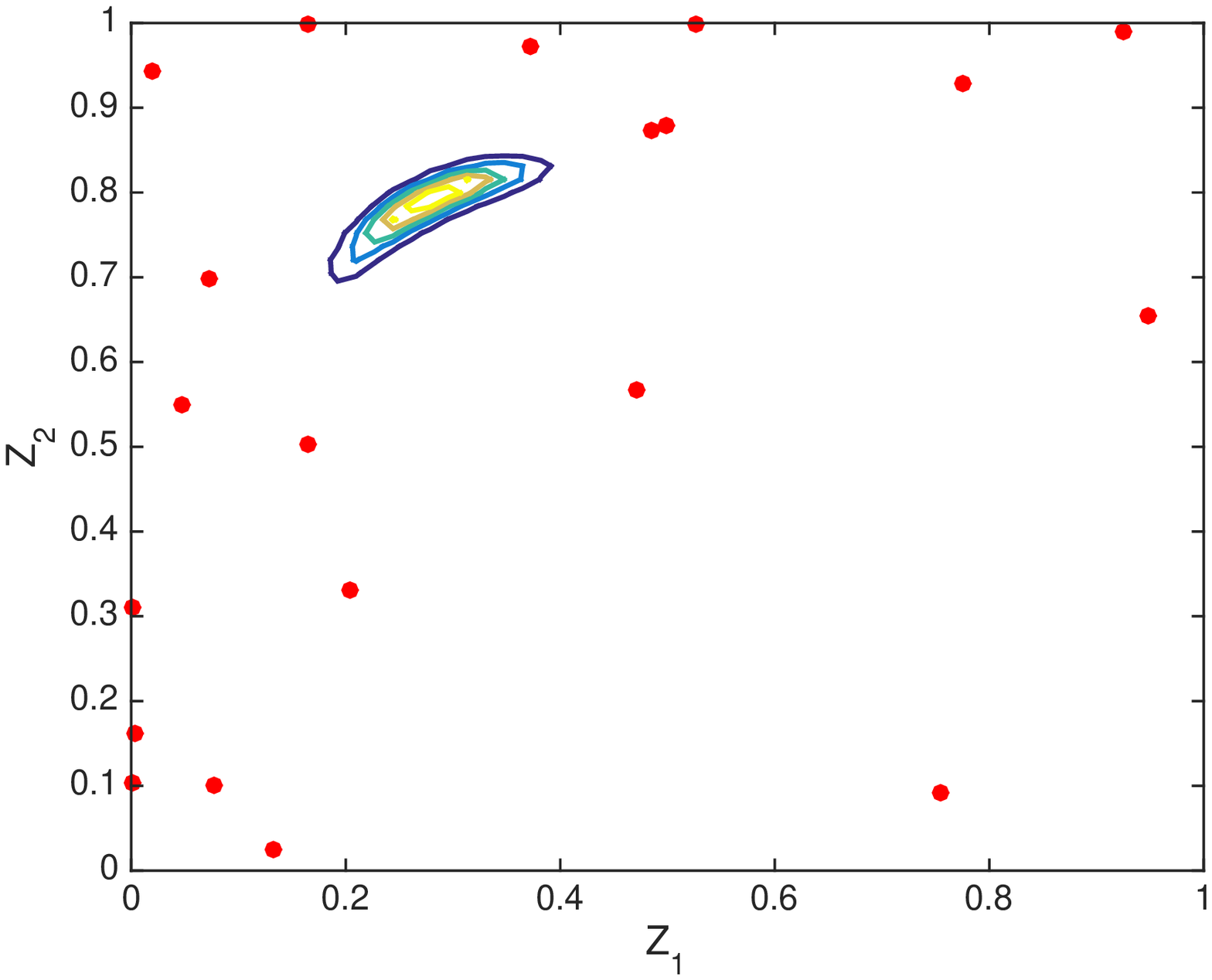}
  \end{overpic}
    \begin{overpic}[width=0.45\textwidth,trim= 20 0 20 15, clip=true,tics=10]{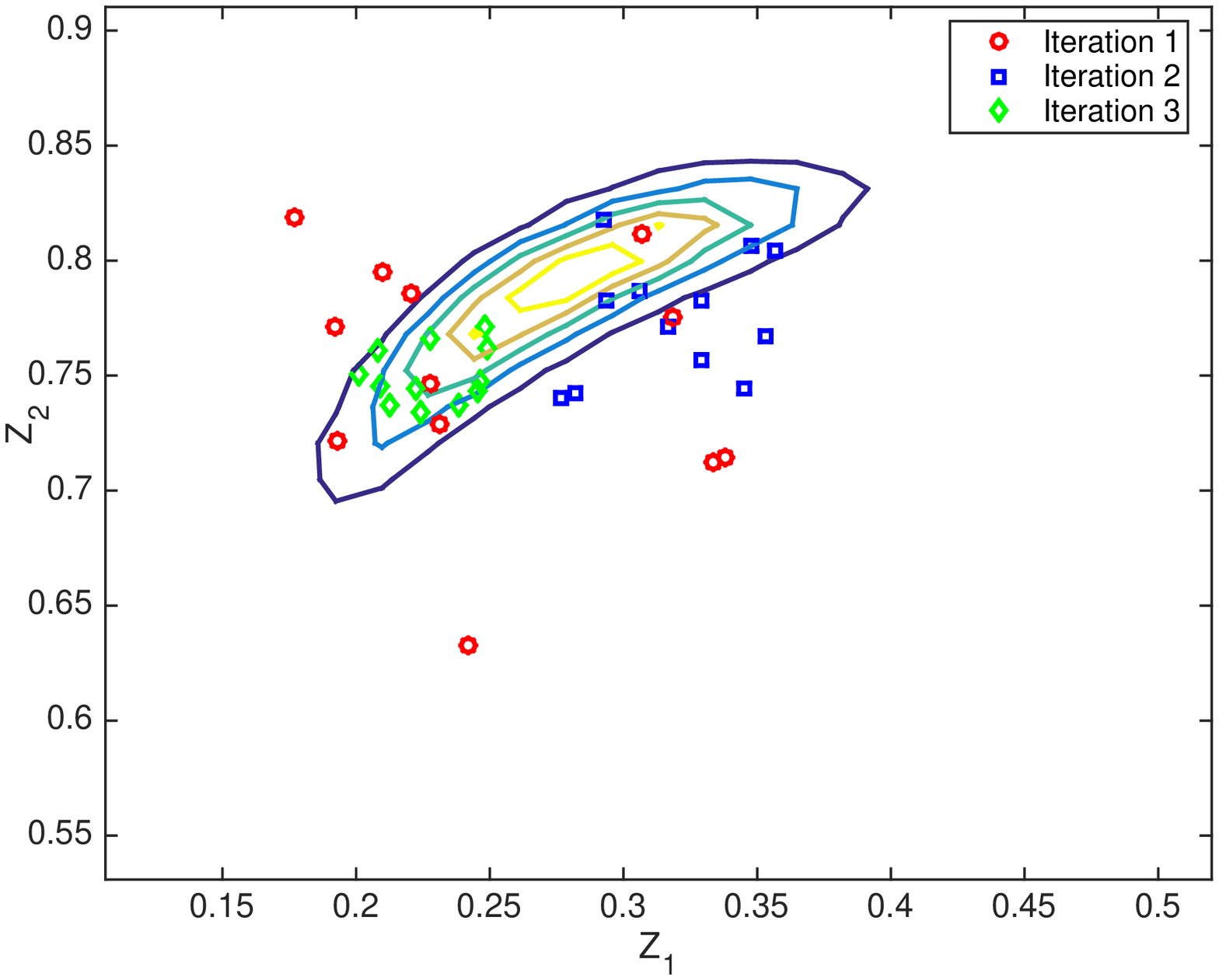}
  \end{overpic}
\end{center}
\caption{Model evaluation points used to construct the prior-based PC surrogates (Left) and AMPC (Right) surrogates.}\label{points_eg1}
  \end{figure}

\begin{figure}
\begin{center}
  \begin{overpic}[width=\textwidth,trim=20 0 20 15, clip=true,tics=10]{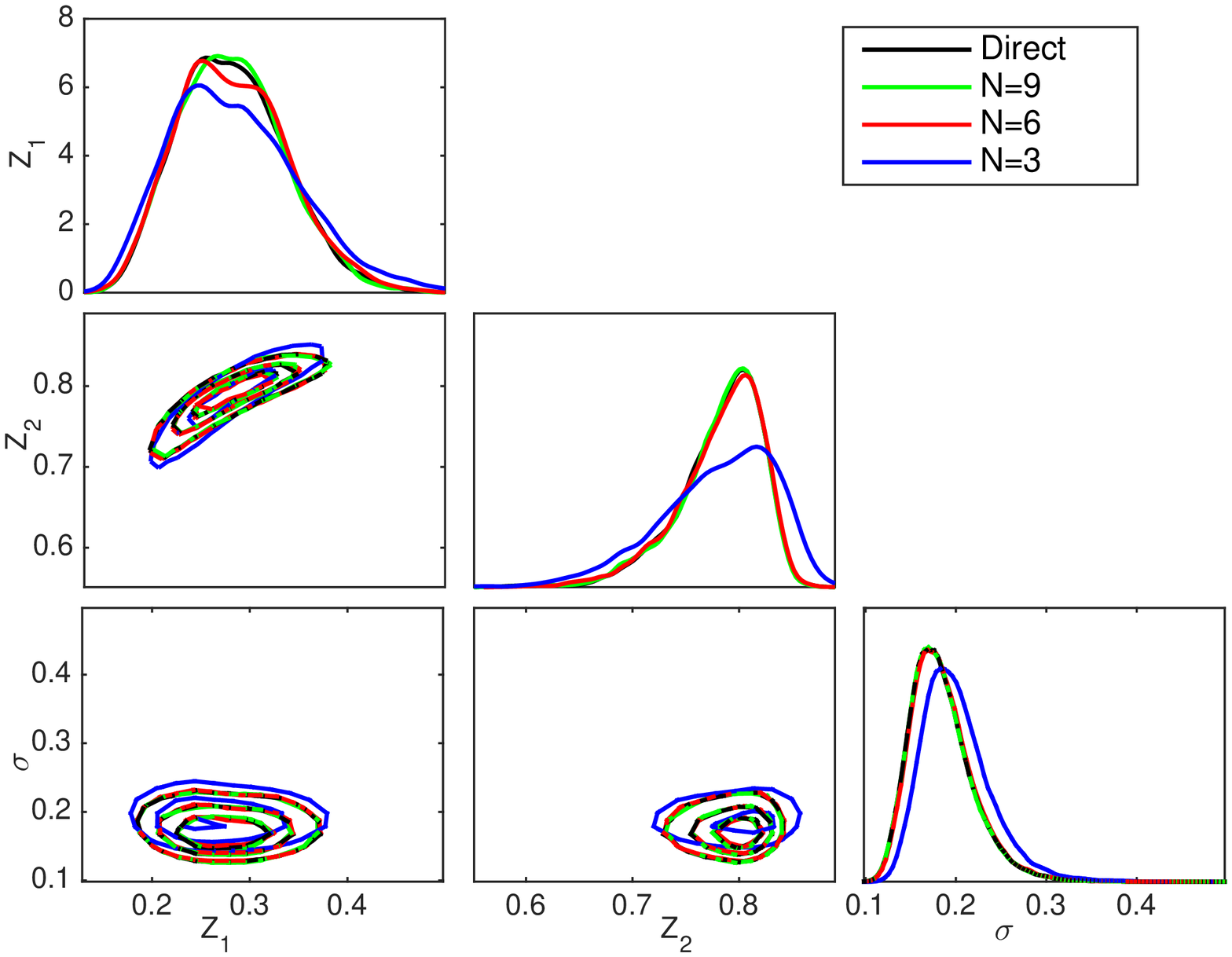}
  \end{overpic}
\end{center}
\caption{"True" posterior density (Black line), compared with the posterior density obtained via the prior-based PC surrogate. $\sigma =0.2$.}\label{pro_pos_s20}
\end{figure}

\begin{figure}
\begin{center}
  \begin{overpic}[width=\textwidth,trim=20 0 20 15, clip=true,tics=10]{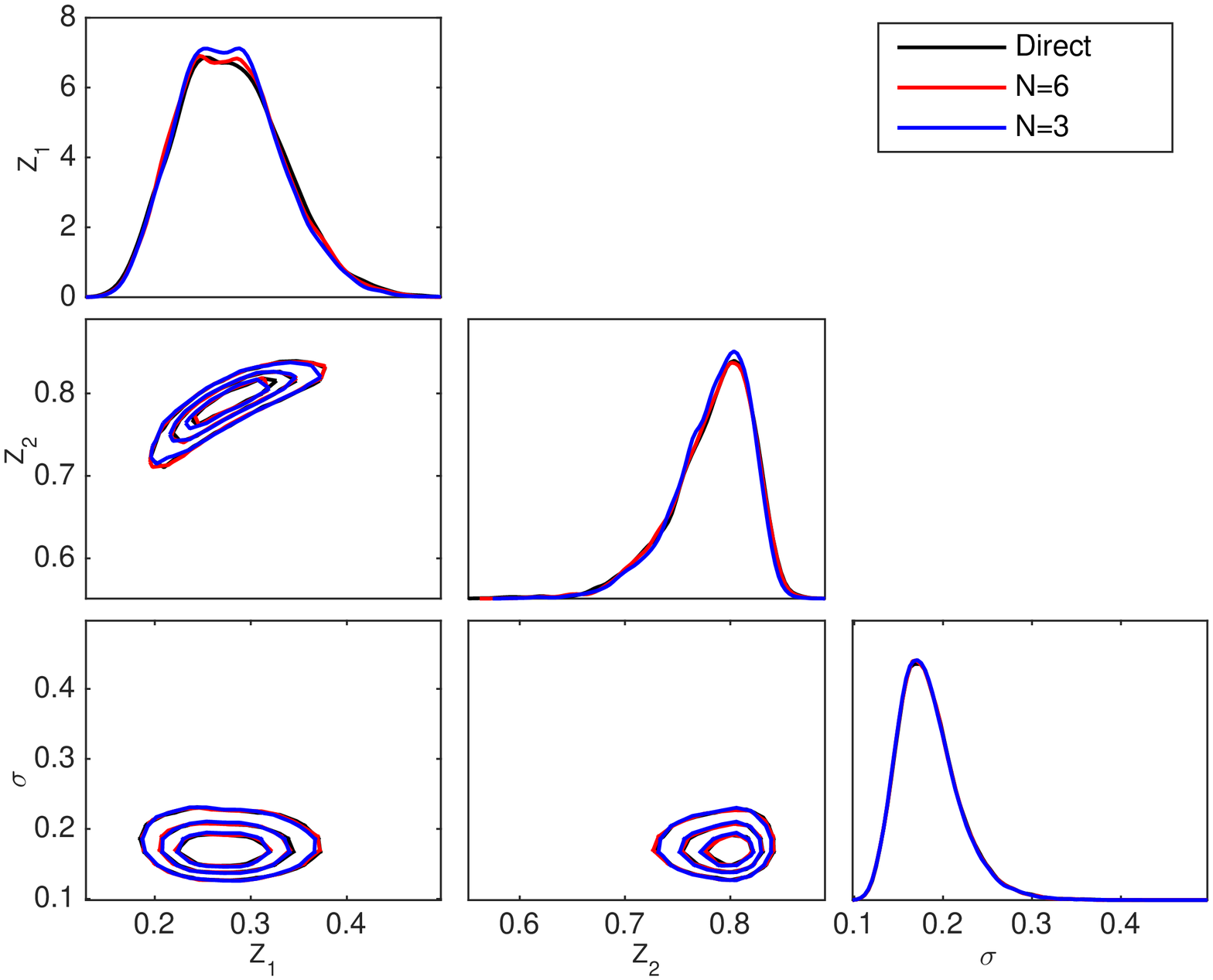}
  \end{overpic}
\end{center}
\caption{"True" posterior density (Black line), compared with the posterior density obtained via the AMPC surrogate. $\sigma =0.2$.}\label{adap_pos_s20}
\end{figure}

 \begin{table}[tp]
      \caption{Example 1 ($\alpha$ is known). Computational times, in seconds, given by three different methods. $\epsilon=1\times10^{-3}, \sigma=0.2$. }\label{eg1_time}
  \centering
  \fontsize{6}{12}\selectfont
  \begin{threeparttable}
    \begin{tabular}{ c ccccc}
  \toprule
 & \multicolumn{2}{c}{Offline}&\multicolumn{2}{c}{Online}\cr
\cmidrule(lr){2-3} \cmidrule(lr){4-5}

  \multirow{1}{*}{Method}  &$\text{$\#$ of model evaluations}$&CPU(s) &$\text{$\#$ of model evaluations}$&CPU(s)     &\multirow{1}{*}{Total time(s)}\cr
  \midrule
    Direct                                     & $-$       & $-$         & 5$\times 10^4$    &$\text{\bf{4424.6}}$      & $\text{\bf{4424.6}}$   \cr
   PC, $N=9$                & 110  & 8.1   & $-$                       &$\text{\bf{17.3}}$      & $\text{\bf{25.4}}$   \cr
   PC, $N=6$                & 56   & 4.1   & $-$                       &12.2                          & 16.3  \cr
   PC, $N=3$                & 20   & 1.5   & $-$                        & 7.7                          & 9.2\cr
  AMPC, $N=6, N_C=2$        & 56       & 4.1        & 32                          & $\text{\bf{14.5}}$       & $\text{\bf{18.6}}$   \cr
  AMPC, $N=3, N_C=2$        & 20       & 1.5         & 56                   & $\text{\bf{12.1}}$       & $\text{\bf{13.6}}$   \cr
    \bottomrule
      \end{tabular}
    \end{threeparttable}

\end{table}

\begin{figure}
\begin{center}
  \begin{overpic}[width=\textwidth,trim=20 0 20 15, clip=true,tics=10]{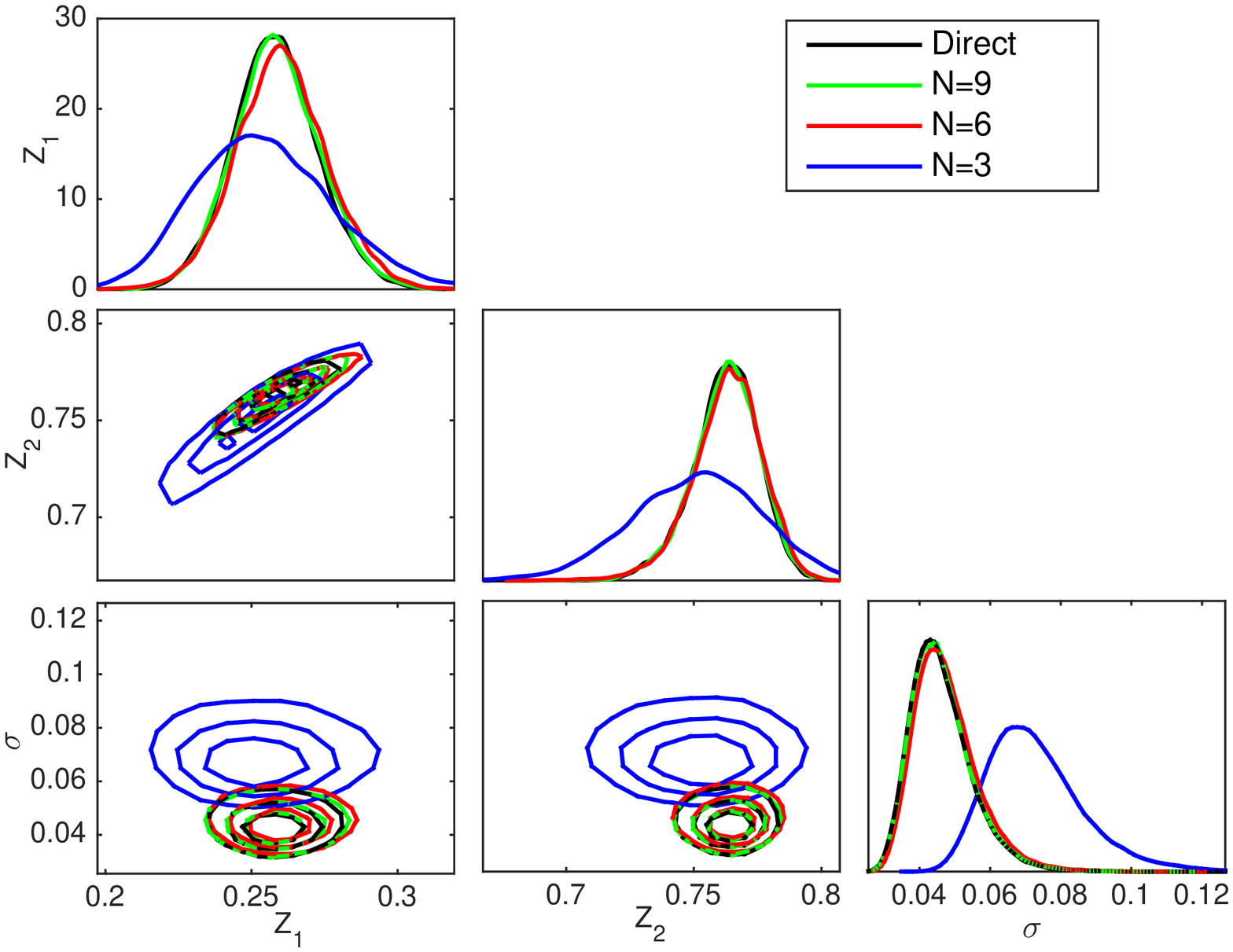}
  \end{overpic}
\end{center}
\caption{"True" posterior density (Black line), compared with the posterior density obtained via the prior-based PC surrogate. $\sigma =0.05$.}\label{pro_pos_s05}
\end{figure}

\begin{figure}
\begin{center}
  \begin{overpic}[width=\textwidth,trim=20 0 20 15, clip=true,tics=10]{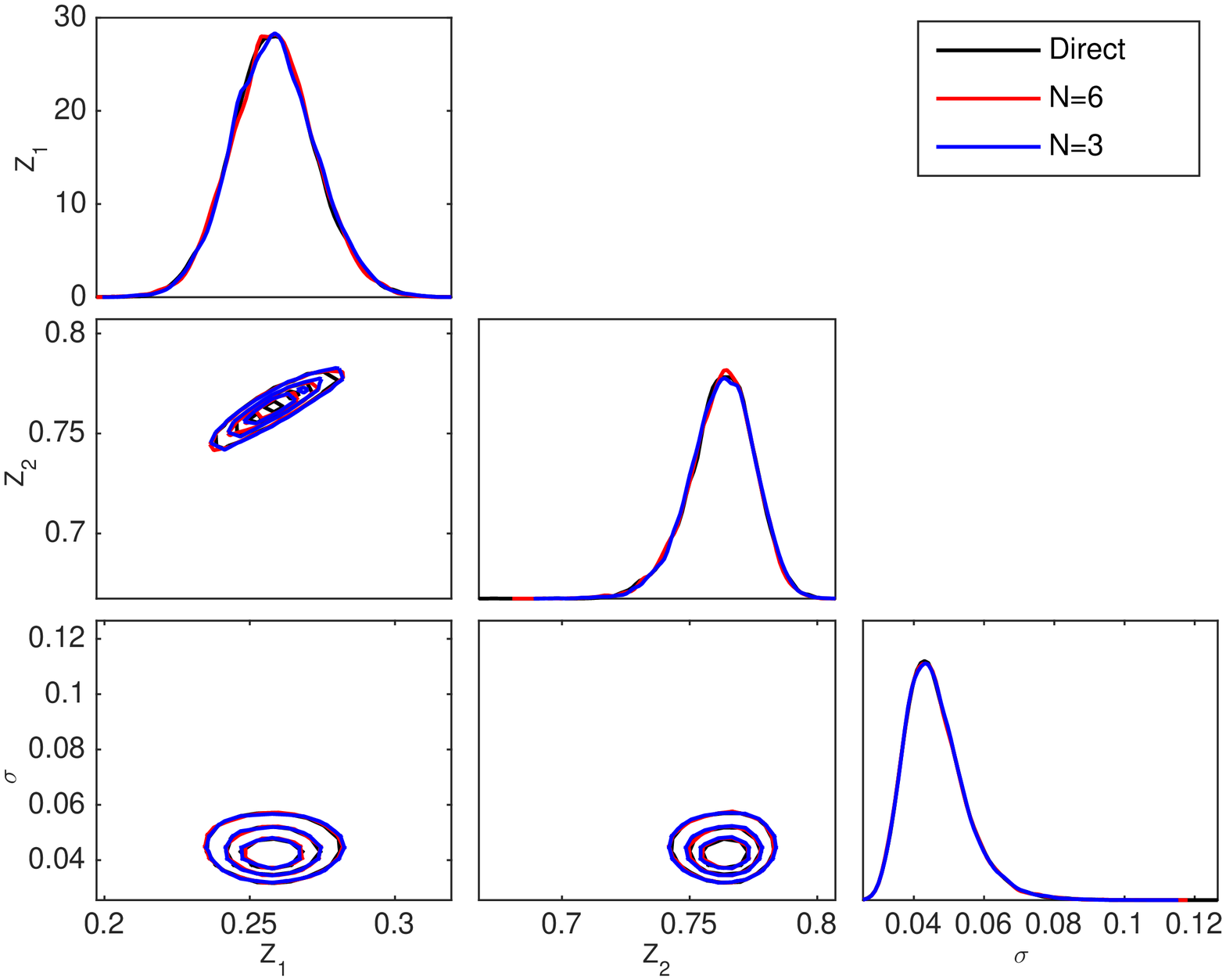}
  \end{overpic}
\end{center}
\caption{"True" posterior density (Black line), compared with the posterior density obtained via the AMPC surrogate. $\sigma =0.05$.}\label{adap_pos_s05}

\bigskip
\begin{center}
    \begin{overpic}[width=0.45\textwidth,trim= 20 0 20 15, clip=true,tics=10]{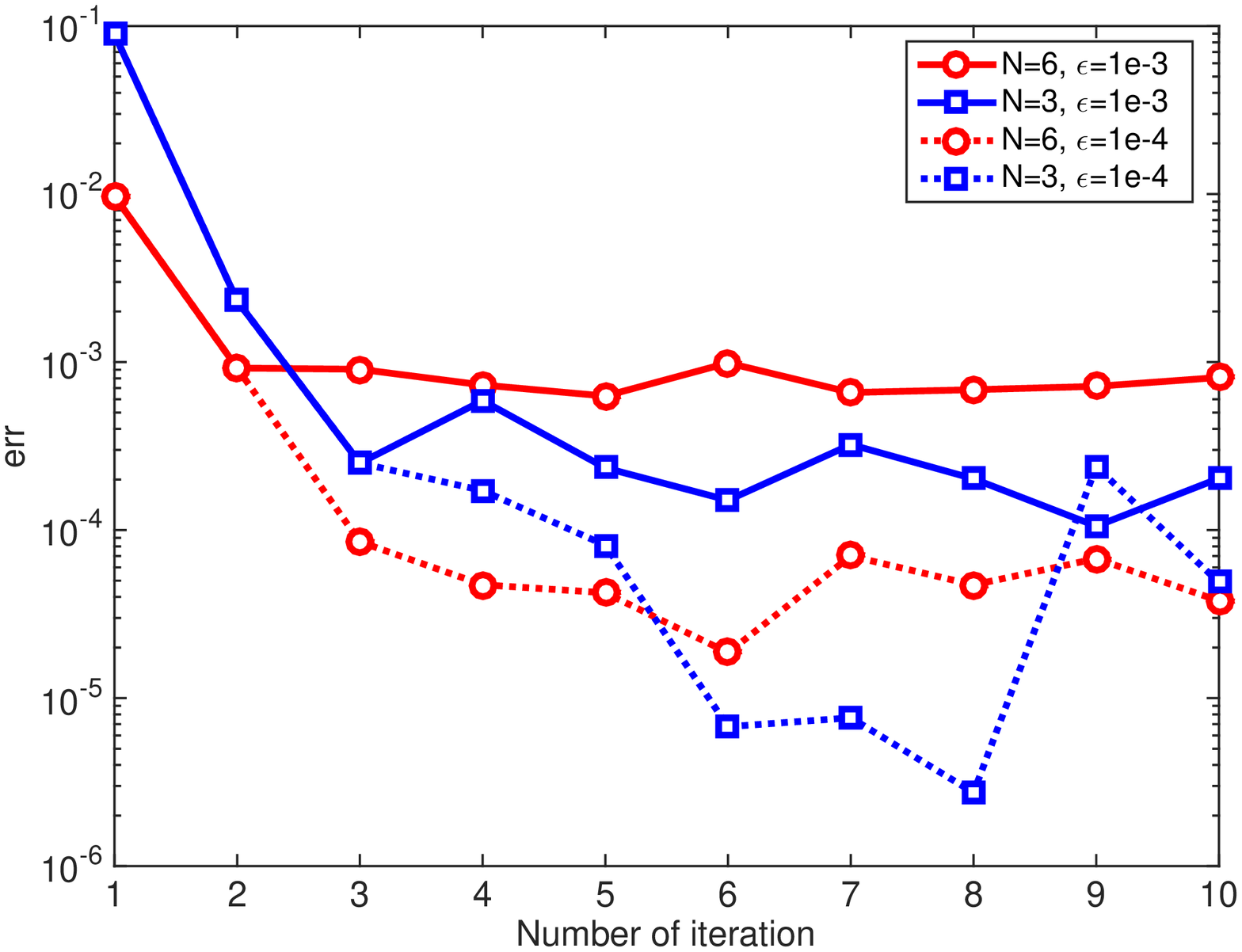}
  \end{overpic}
\end{center}
\caption{Example 1: the relative errors with number of iteration for AMPC approach.}\label{error-f1}
\end{figure}

\subsubsection{Determine the source location $Z$ when $\alpha$ is known}\label{sec:1}

With this test, we shall show that the AMPC approach can improve the accuracy of the prior-based PC approach without a significantly increase in computational time. We set the exact source location as $(0.25, 0.75)$ and set the standard deviation as $\sigma =0.2$. To better present the results, we shall perform the following three-types of approaches:
\begin{itemize}
\item The \textit{conventional} MCMC, or the direct MCMC approach based the forward model evaluations. This approach is supposed to be time consuming, yet with high accuracy. The associated solution can be viewed as a reference solution.
\item The MCMC approach with a prior-based PC surrogate --  \textit{the prior-based PC approach} for short.
\item The AMPC approach presented in Section 3.
\end{itemize}

In our computations, the initial guess for $Z$ and $\sigma$ are chosen as $(0, 0)$ and $1$, respectively.  For the AMPC surrogate,  we first construct a prior-based PC surrogate $\widetilde{G}_{N}$ with $N=3$ using 20 model evaluations.  Then we set the initial multi-fidelity surrogate as $u^M_0=\widetilde{G}_{N}$.  Using $u^M_0$ and the true forward model, we can draw a random sample $y$ via Algorithm \ref{alg:AMPC}, which is expected to be much closer to the posterior region. When the error indicator $err$ (\ref{conerr}) exceeds the threshold $\epsilon=1\times 10^{-3}$, we choose $Q=2 {2+2 \choose 2} =12$ random points in the region $B(y,R)$ to refine the multi-fidelity PC surrogate $u_0^M$. Then this procedure is further repeated 10 times (or until the threshold is reached). We first show how the locations of the samples (the points in parameter space at which the high-fidelity model are evaluated ) evolve in our adaptive procedure. As shown in Figure \ref{points_eg1} (Left), although 20 model evaluation points (red color) are used in the prior-based PC method, none of them actually fall in the region of significant posterior probability. This is one of the main drawbacks of the prior-based PC surrogate. In contrast, with the adaptive multi-fidelity approach (Figure \ref{points_eg1}, right), \textcolor{black}{4 of the 12} model evaluations (red points) occur in the important region of the posterior distribution for the first iteration. In the third iteration,  almost all of the evaluation points (green points) fall into the important region of the posterior distribution.

Next we investigate the convergence behavior of the prior-based PC approach and our AMPC approach. Notice that the accuracy and computational cost of the prior-based PC method depend on the order of the PC expansion. We have shown in Figure \ref{pro_pos_s20} the one and two dimension marginal posterior distributions of the three parameters by the prior-based PC approach with varying $N$. Very close agreement with the true solution is observed with $N=6$, and this agreement improves further with $N=9$.  However, a lower order PC expansion ($N$=3) results in a very poor density estimate. The corresponding results obtained by AMPC are shown in Figure \ref{adap_pos_s20}.  The black-solid lines are the marginal posterior probability densities estimated by the conventional MCMC (the reference solution), and the red and blue-solid lines represent the approximations by the AMPC approach with $N=6$ and $N=3$, respectively. It is clearly shown that the AMPC approach results in a good approximation to the reference solution. Even with a lower order PC expansion $N=3$, the posterior density obtained by AMPC agrees well with the reference solution. By comparing Figures \ref{pro_pos_s20} and \ref{adap_pos_s20}, we learn that, with the same order of PC expansion, the approximation results using AMPC are much more accurate than those using the prior-based PC method.

The computational times, in seconds, given by three different approaches are presented in Table \ref{eg1_time}. The main computational time in the prior-based PC approach is the offline high-fidelity model evaluations. The number of such model evaluations for with $N=\{9, 6, 3\}$  are 110, 56 and 20,  respectively. Upon obtaining the prior-based PC surrogate, the online MCMC simulation  is very cheap as it does not require any high-fidelity model (the true forward model) evaluations. For the AMPC, however, we indeed need the online high-fidelity model simulations. Nevertheless, in contrast to $5\times 10^4$ high-fidelity model evaluations in the conventional MCMC, the number of high-fidelity model evaluations for the AMPC approach with $N=\{6,3\}$ are only 32 and 56,  respectively.  As can be seen from the fifth column of Table \ref{eg1_time}, the AMPC approach does not significantly increase the computation time compared to the prior-based PC approach, yet it offers significant improvement in the accuracy. With the same accuracy, the AMPC approach actually uses much less high-fidelity model evaluations than the prior-based PC approach (with $N=9$) and the conventional MCMC. This confirms the efficiency of the AMPC approach.

It is well-known that the smaller the standard deviation in the noise distributions is, the more accurate the prior-based PC surrogate should be constructed, otherwise, the error between the true posterior and its approximation reduced by the surrogate model could be very large \cite{Yan+Zhang2017IP}.  To investigate the influence of the variance of the measurement error, we run the example  with $\sigma = { 0.05}$. The corresponding results are shown in Figures \ref{pro_pos_s05}-\ref{adap_pos_s05}. Compare with Figure \ref{pro_pos_s20}, it is expected to see that the prior-based PC approach with $N=3$ yields significantly worse results for $\sigma = { 0.05}$.  In contrast, the approximated posterior distributions computed with the AMPC approach with $N=3$ and $6$ are shown in Figure \ref{adap_pos_s05}. We can see that the AMPC approach can provide much improved results in this situation.

We finally plot the relative errors $err$ with the number of iteration in Figure \ref{error-f1}.  The effect of the threshold $\epsilon$ is also investigated in this figure. For $\epsilon=1\times 10^{-3},$ it is shown that the relative error decays with respect to the number of iterations. For $\epsilon=1\times 10^{-4},$ we notice that the error decreases exponentially up to the third iteration and then oscillates slightly around $5\times 10^{-5}$.  Note that the refinement occurs whenever $err$ exceed the threshold $\epsilon$. It is observed that a lower $\epsilon$ can result in higher accuracy approach, however, the computational cost is also enhanced due to more high-fidelity model evaluations.

 \subsubsection{Determine the source location $Z$ when $\alpha$ is unknown}

 \begin{figure}
\begin{center}
  \begin{overpic}[width=0.45\textwidth,trim=20 0 20 15, clip=true,tics=10]{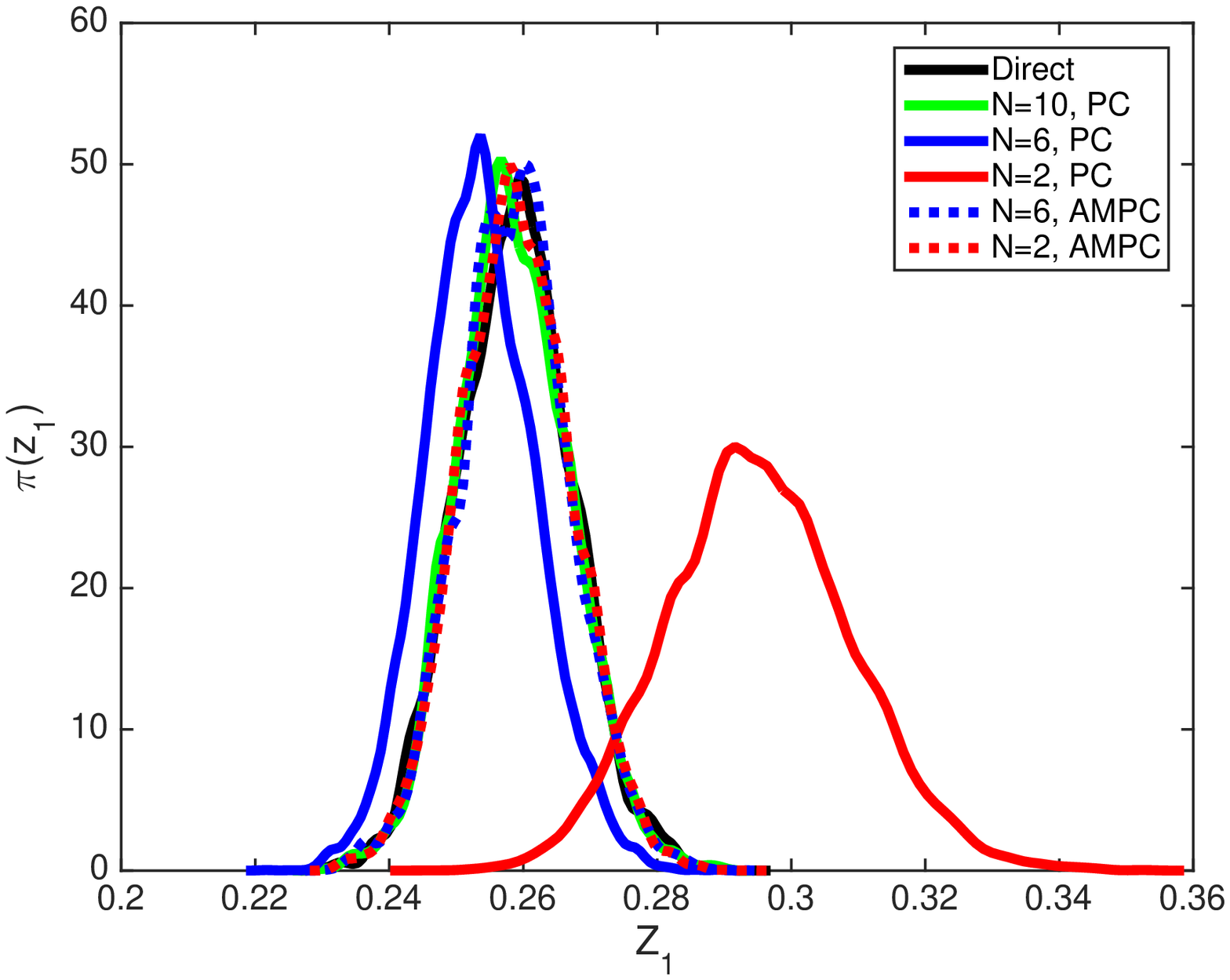}
  \end{overpic}
    \begin{overpic}[width=0.45\textwidth,trim= 20 0 20 15, clip=true,tics=10]{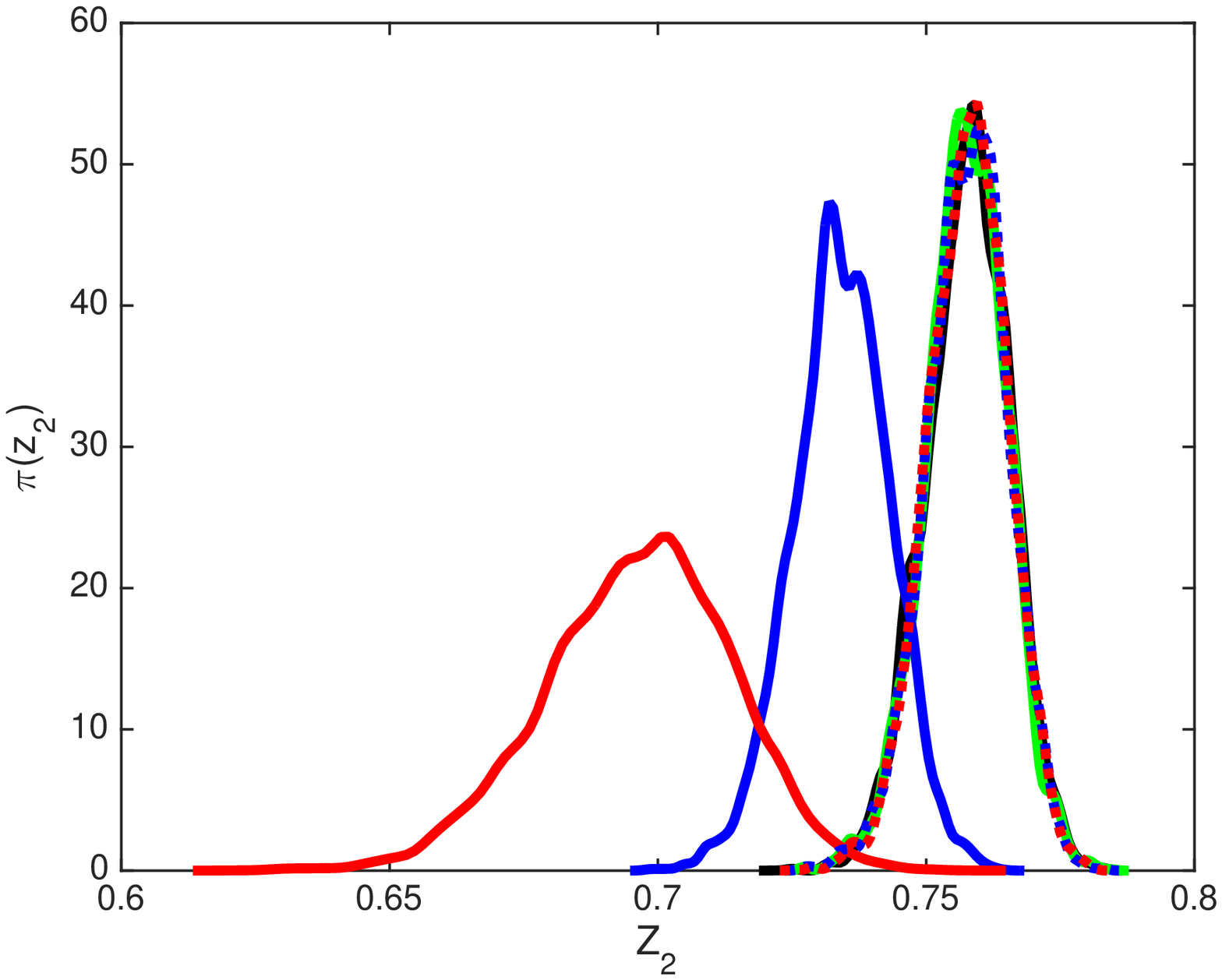}
  \end{overpic}
    \begin{overpic}[width=0.45\textwidth,trim= 20 0 20 15, clip=true,tics=10]{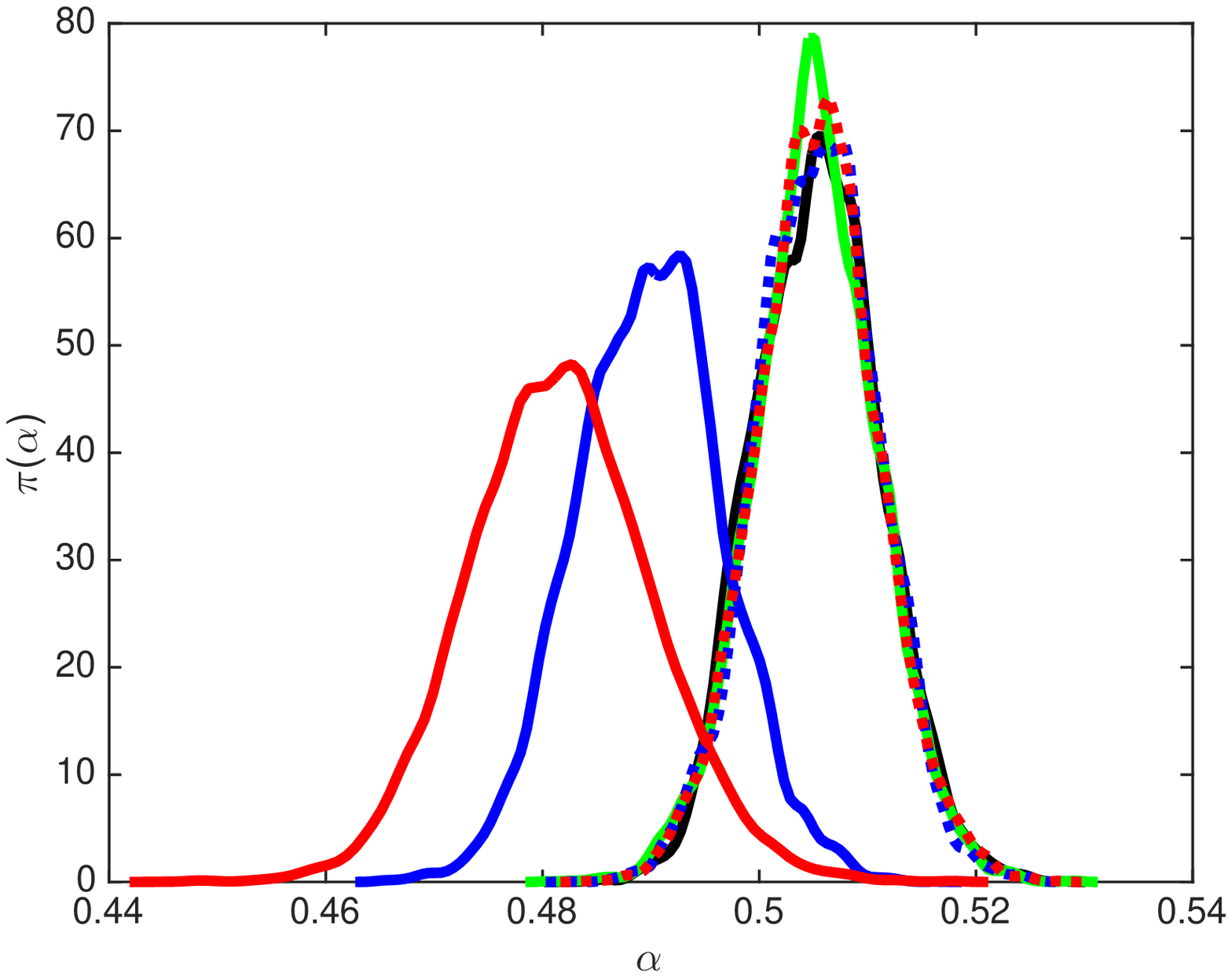}
  \end{overpic}
    \begin{overpic}[width=0.45\textwidth,trim= 20 0 20 15, clip=true,tics=10]{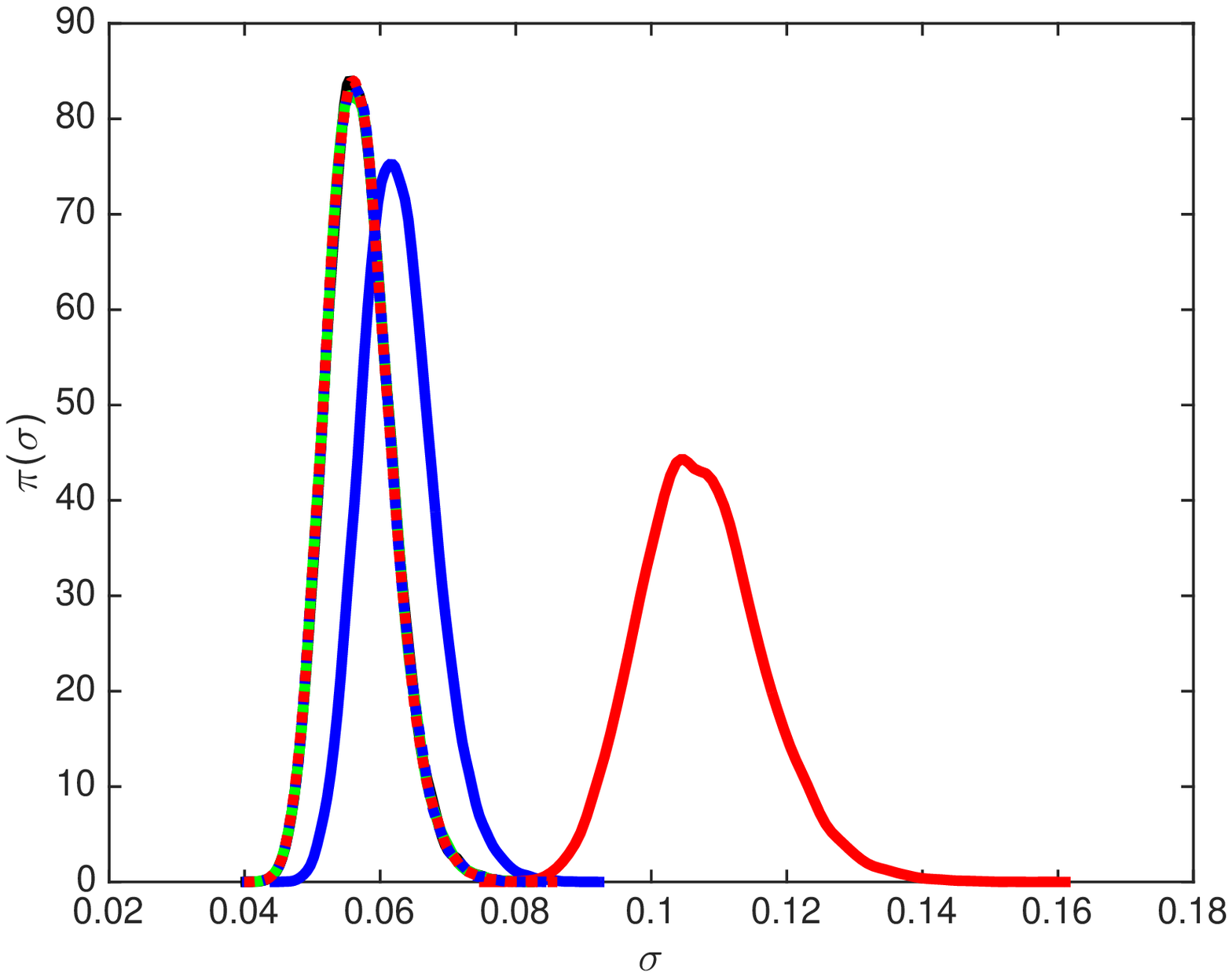}
  \end{overpic}
\end{center}
\caption{Example 1 ($\alpha$ is unknown): The marginal distribution of each component of the parameter. Densities result from prior based PC surrogate and AMPC approach; compared to direct method.}\label{pi-eg2}
\end{figure}

 \begin{table}[tp]
      \caption{Example 1 ($\alpha$ is unknown). Computational times, in seconds, given by three different methods. $\epsilon=1\times10^{-3}, \sigma=0.05$. }\label{eg1_time_2}
  \centering
  \fontsize{6}{12}\selectfont
  \begin{threeparttable}
    \begin{tabular}{ c ccccc}
  \toprule
 & \multicolumn{2}{c}{Offline}&\multicolumn{2}{c}{Online}\cr
\cmidrule(lr){2-3} \cmidrule(lr){4-5}

  \multirow{1}{*}{Method}  &$\text{$\#$ of model evaluations}$&CPU(s) &$\text{$\#$ of model evaluations}$&CPU(s)     &\multirow{1}{*}{Total time(s)}\cr
  \midrule
    Direct                                     & $-$       & $-$         & 5$\times 10^4$    &$\text{\bf{4483.7}}$      & $\text{\bf{4483.7}}$   \cr
   PC, $N=10$                & 572  & 41.5   & $-$                       &$\text{\bf{26.7}}$      & $\text{\bf{68.2}}$   \cr
   PC, $N=6$                  & 168   & 12.3   & $-$                       &16.1                          & 28.4  \cr
   PC, $N=2$                  & 20     & 1.5     & $-$                        & 6.9                          & 8.4 \cr
  AMPC, $N=6, N_C=2$        & 168       & 12.3          & 60                         & $\text{\bf{21.2}}$       & $\text{\bf{33.5}}$   \cr
  AMPC, $N=2, N_C=2$        & 20       & 1.5            & 80                        & $\text{\bf{13.5}}$       & $\text{\bf{15.0}}$   \cr
    \bottomrule
      \end{tabular}
    \end{threeparttable}

\end{table}

  \begin{figure}
\begin{center}
  \begin{overpic}[width=0.45\textwidth,trim=20 0 20 15, clip=true,tics=10]{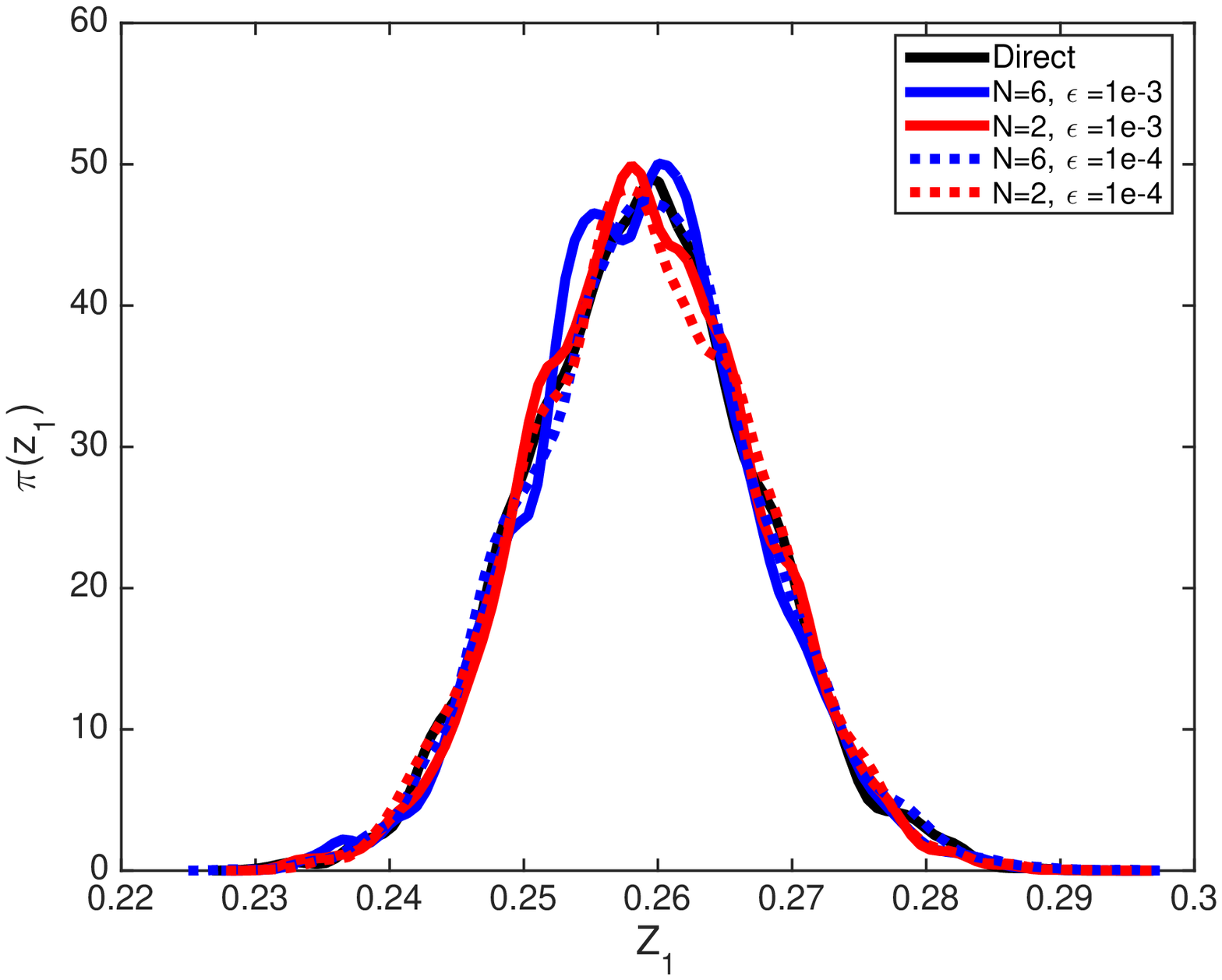}
  \end{overpic}
    \begin{overpic}[width=0.45\textwidth,trim= 20 0 20 15, clip=true,tics=10]{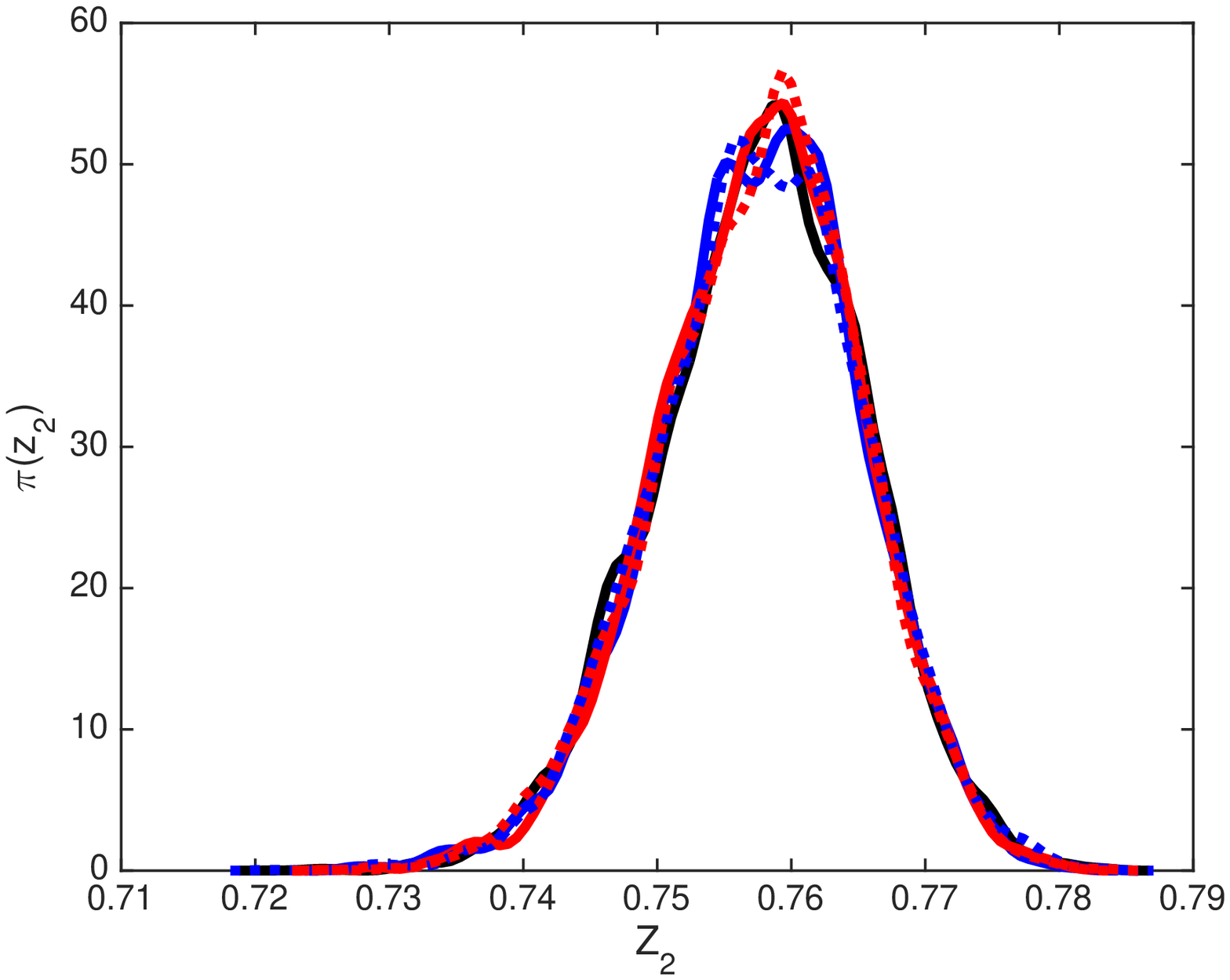}
  \end{overpic}
    \begin{overpic}[width=0.45\textwidth,trim= 20 0 20 15, clip=true,tics=10]{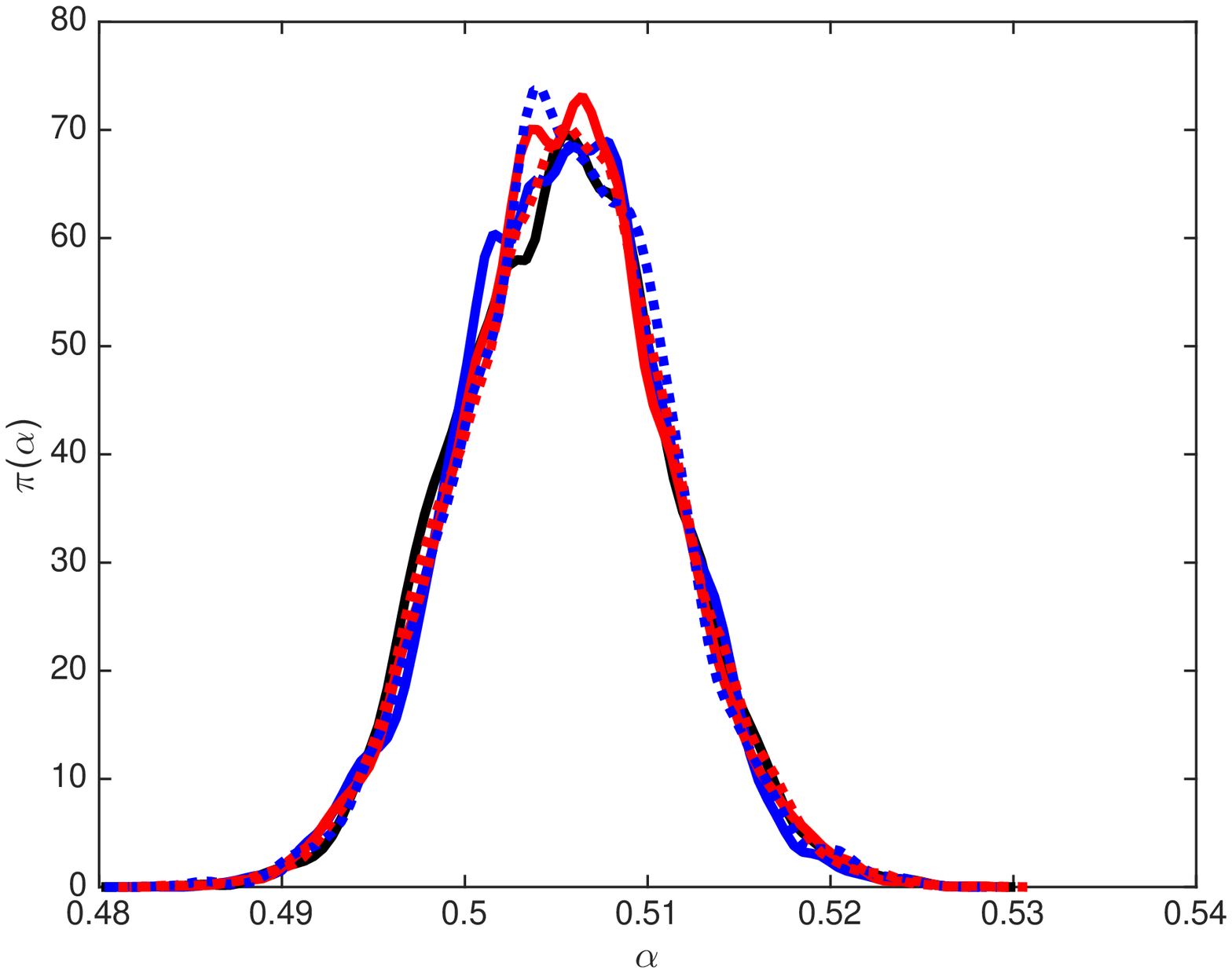}
  \end{overpic}
    \begin{overpic}[width=0.45\textwidth,trim= 20 0 20 15, clip=true,tics=10]{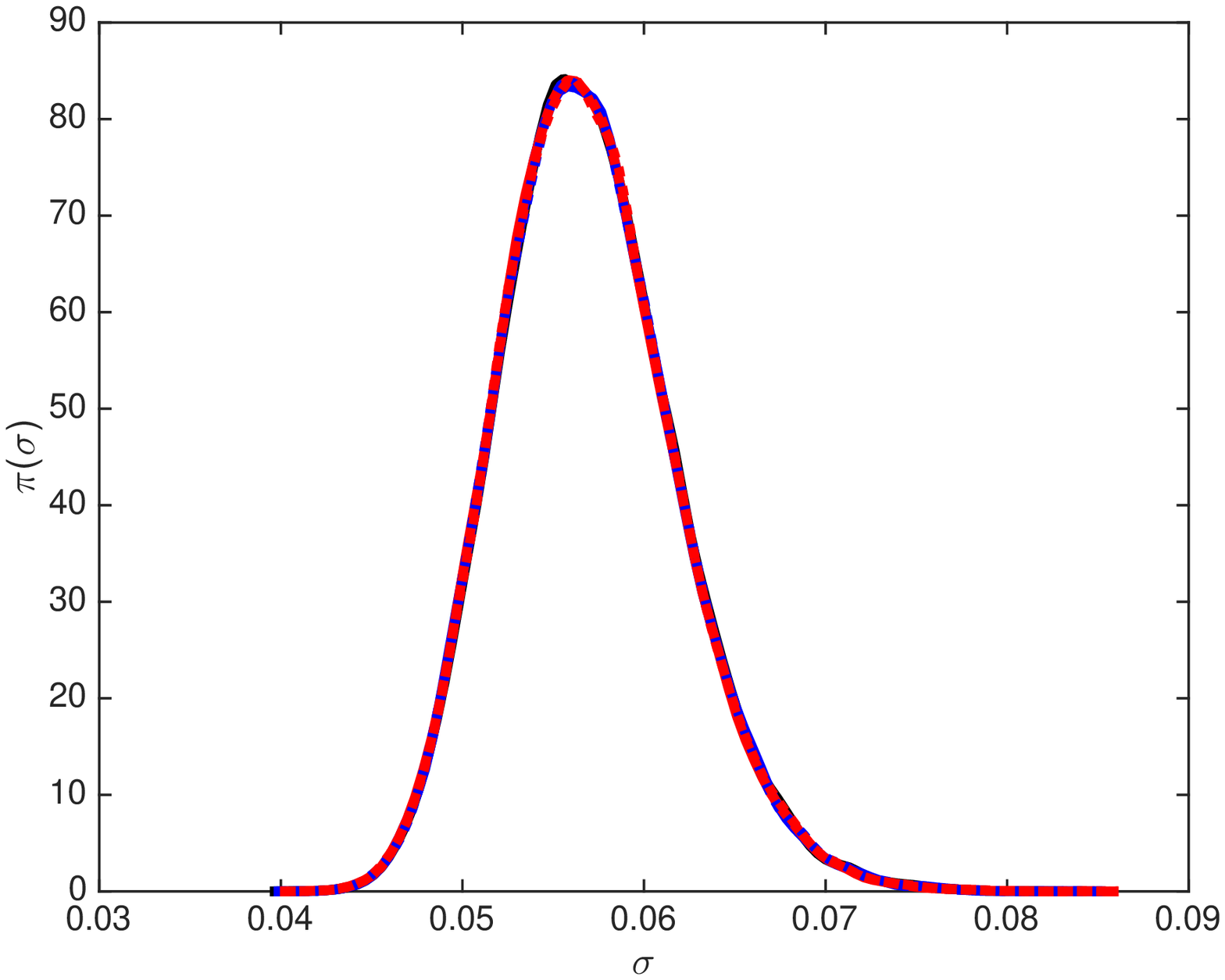}
  \end{overpic}
\end{center}
\caption{Example 1 ($\alpha$ is unknown): The marginal distribution of each component of the parameter. Densities result from AMPC approach; compared to direct method.}\label{pi-eg2-tol}
  \end{figure}

  \begin{figure}
\begin{center}
    \begin{overpic}[width=0.45\textwidth,trim= 20 0 20 15, clip=true,tics=10]{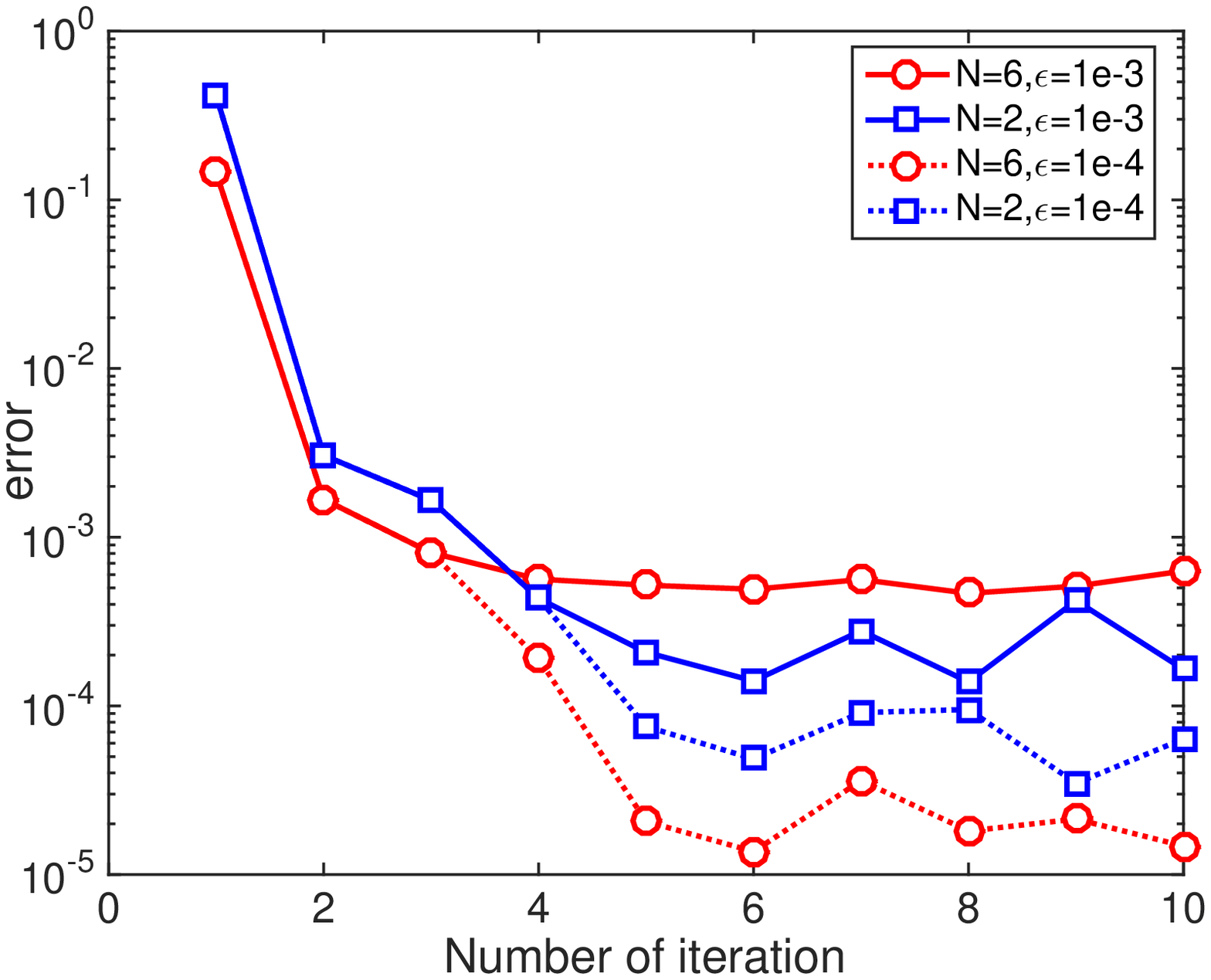}
  \end{overpic}
\end{center}
\caption{Example 1 ($\alpha$ is unknown): the relative errors with number of iteration for AMPC approach.}\label{error-f2}
\end{figure}

In this section, we assume that the fractional order $\alpha$ in the model (\ref{2dsource}) is unknown. Thus the purpose is to recover is $Z=(Z_0,Z_1,\alpha)\in \mathbb{R}^3$. We set a uniform prior for $Z=(Z_0,Z_1,\alpha)$ on the parameter space $[0,1]^3$.  To generate data for the inversion, the `true' parameter is choose to be $(Z_0=0.25, Z_1=0.75, \alpha=0.5)$. In our computations, the measurements are taken at three different times $t=\{0.25, 0.75,1\}$ using 25 locations on a uniform $5\times5$ grid covering the domain $D$, leading to $n_d=75$ measurements.  The initial guess for $Z$ is set as (0, 0, 0.1).

The posterior densities of components $Z=\{Z_0,Z_1,\alpha\}$ with $\sigma =0.05$ noise  are shown in Figure \ref{pi-eg2}.  As expected, a poor estimate is obtained by the prior-based PC approach with a lower order $N$, while the AMPC approach admits a much improved result. The computational times given by three different approaches are presented in Table \ref{eg1_time_2}. For the prior-based PC approach, we notice that even with $N=6$ (meaning that the number of model evaluations is about two times than that of the AMPC approach with $N=2$), it failed to obtain an accurate approximation as in the AMPC approach (see Figure \ref{pi-eg2}). Meanwhile, the results by AMPC approach agree well with the conventional MCMC approach, but its computational time is only a very small portion of that of the conventional MCMC approach. To see this, with $N=2$ and \textcolor{black}{$Q=80$} online high-fidelity evaluation,  the AMPC approach uses only \textcolor{black}{$13.5s$} for $5\times 10^4$ MCMC iterations. In contrast, for the same number of MCMC iterations the conventional MCMC approach needs $4483.7s \approx 1.25\mathrm{h}$.  Therefore, the speedup of the AMPC approach over the conventional approach is dramatic. Also, the AMPC approach with $N=6$ (\textcolor{black}{$21.2s$}) and $N=2$ (\textcolor{black}{$13.5s$}) is significantly faster  for online computations compared to the prior-based PC approach with $N=10$ ($26.7s$).

We now investigate the sensitivity of the numerical results with respect to the threshold $\epsilon$. The one dimension marginal posterior distributions by the AMPC approach are illustrated in Figure \ref{pi-eg2-tol}. The relative errors $err$ with the number of iteration for AMPC approach are also shown in Figure \ref{error-f2}. These results show that the AMPC approach admits reasonably accurate results   for different values of $\epsilon$.

 \subsection{Example 2: An elliptic PDE inverse problem}
Let $\Omega=[0,1]^2$, and we consider the following elliptic PDE
\begin{eqnarray}\label{2dellip}
\begin{array}{rl}
-\nabla\cdot(\kappa(x) \nabla u(x))&=f(x),\quad x\in D,\\
 u(x)&=0, \quad  \quad \,\, x\in \partial{D}.
 \end{array}
\end{eqnarray}
We set the source as $f(x) = 100\sin(\pi x_1)\sin(\pi x_2)$. The goal here is to recover the permeability $\kappa(x)$ from a set of noise measurements of $u.$

In our simulations, the permeability field is projected onto a set of radial basis functions (RBF), and hence the inference is carried out on the weights $\{\kappa_i\}$ of the RBF bases:
\begin{eqnarray*}
\kappa(x)=\sum^{9}_{i=1}\kappa_i \exp\left(- 0.5\frac{\|x-x_{0,i}\|^2}{0.15^2}\right),
\end{eqnarray*}
where $\{x_{0,i}\}^{9}_{i=1}$ are the centers of the RBF.  The prior distributions on each of the weights $\kappa_i, i=1,\cdots, 9$ are assumed to be independent and log-normal, i.e., $\log(\kappa_i)\sim N(0,1)$.  To avoid the inverse crime, the true weight is drawn from  $\log(\kappa_i)\sim U(-5,5)$.  The measurements of $u$ are generated by selecting  the values of the states at 81 measurement sensors evenly distributed over $[0.1,0.9]^2$ with grid spacing 0.1.  The true permeability field that is used to generate the test data and the corresponding output data are shown in Fig.\ref{set_eg3}. The observational errors are set to be additive and Gaussian:
\begin{eqnarray*}
d_j=u(x_j)+\max_{j}\{|u(x_j)|\}\delta\xi_j,
\end{eqnarray*}
where $\delta$ dictates the noise level and $\xi_j$  is a  Gaussian random variable with zero mean and unit standard deviation. Notice that in this example, the parameters are far away from the prior, and thus one would expect a bad approximation with the prior-based PC approach due to the lack of global accuracy of the PC surrogate.

\begin{table}[tp]
      \caption{Example 2. Computational times, in seconds, given by three different methods. $\epsilon=1\times10^{-3}, \delta=0.05$.}\label{eg2_time}
  \centering
  \fontsize{6}{12}\selectfont
  \begin{threeparttable}
    \begin{tabular}{ c ccccc}
  \toprule

   & \multicolumn{2}{c}{Offline}&\multicolumn{2}{c}{Online}\cr
\cmidrule(lr){2-3} \cmidrule(lr){4-5}

  \multirow{1}{*}{Method}  &$\text{$\#$ of model evaluations}$&CPU(s) &$\text{$\#$ of model evaluations}$&CPU(s)     &\multirow{1}{*}{Total time(s)}\cr
  \midrule
    Direct                                     & $-$       & $-$         & 5$\times 10^4$    &$\text{\bf{1492.2}}$      & $\text{\bf{1492.2}}$   \cr
   PC, $N=7$                & $\text{\bf{28,880}}$  & $\text{\bf{1246.7}}$    & $-$                       &$\text{\bf{241.2}}$      & $\text{\bf{1487.9}}$   \cr
   PC, $N=6$                & 10,010  & 340.4     & $-$                       &121.7                         & 426.1  \cr
   PC, $N=4$                & 1,430   & 37.5       & $-$                        & 34.3                         & 71.8  \cr
   PC, $N=2$                & 110      & 2.9         &$-$                         & 11.4                          & 14.3  \cr
   AMPC, $N=4, N_C=2$   & 1,430    & 37.5       & 570                     & 49.2     & 86.7   \cr
  AMPC, $N=2, N_C=2$        & $\text{\bf{110}}$ & $\text{\bf{2.9}}$ & $\text{\bf{1,010}}$                   & $\text{\bf{38.7}}$       & $\text{\bf{41.6}}$   \cr
    \bottomrule
      \end{tabular}
    \end{threeparttable}

\end{table}

  \begin{figure}
\begin{center}
    \begin{overpic}[width=0.45\textwidth,trim= 20 0 20 15, clip=true,tics=10]{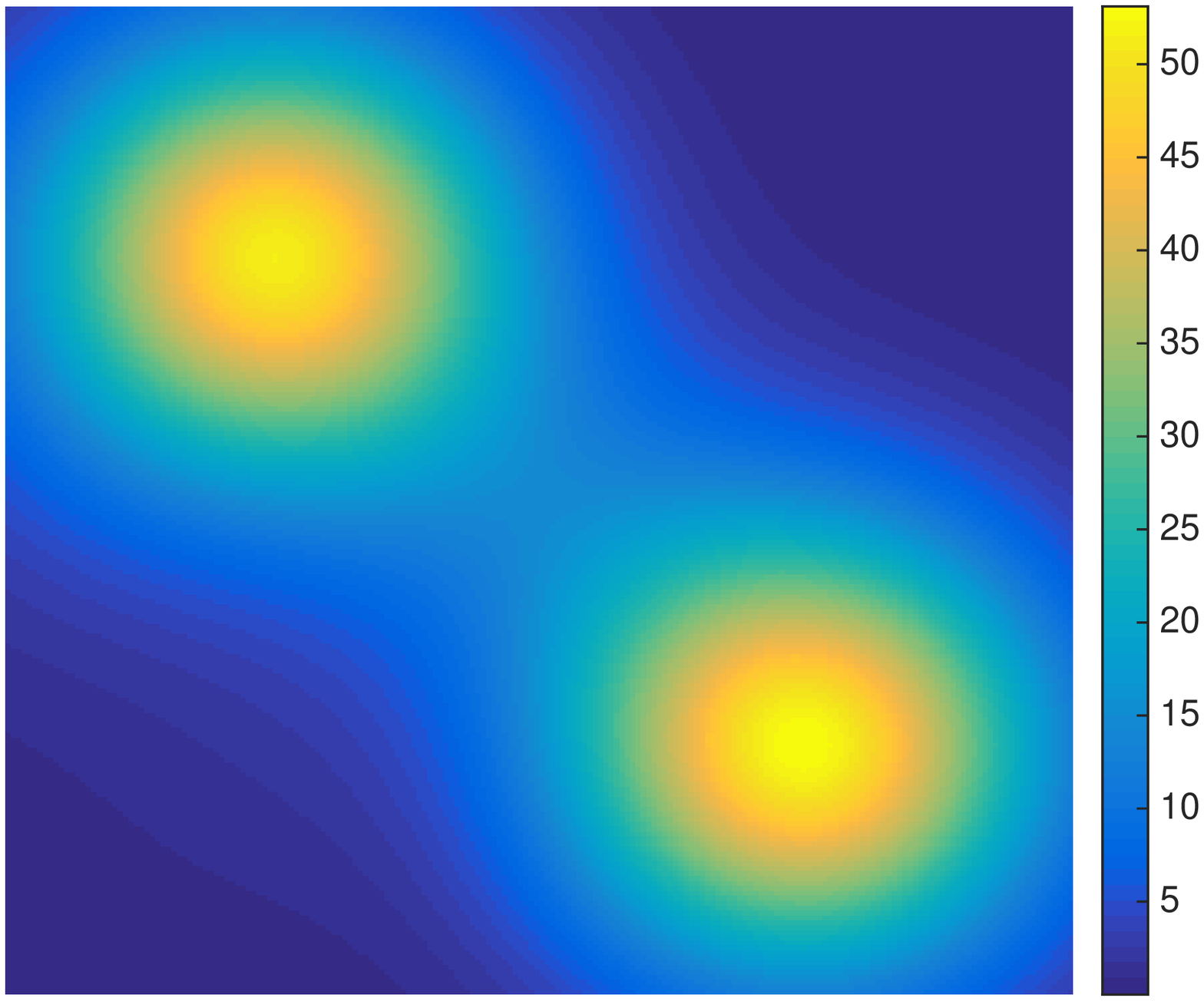}
      \end{overpic}
   \begin{overpic}[width=0.45\textwidth,trim= 20 0 20 15, clip=true,tics=10]{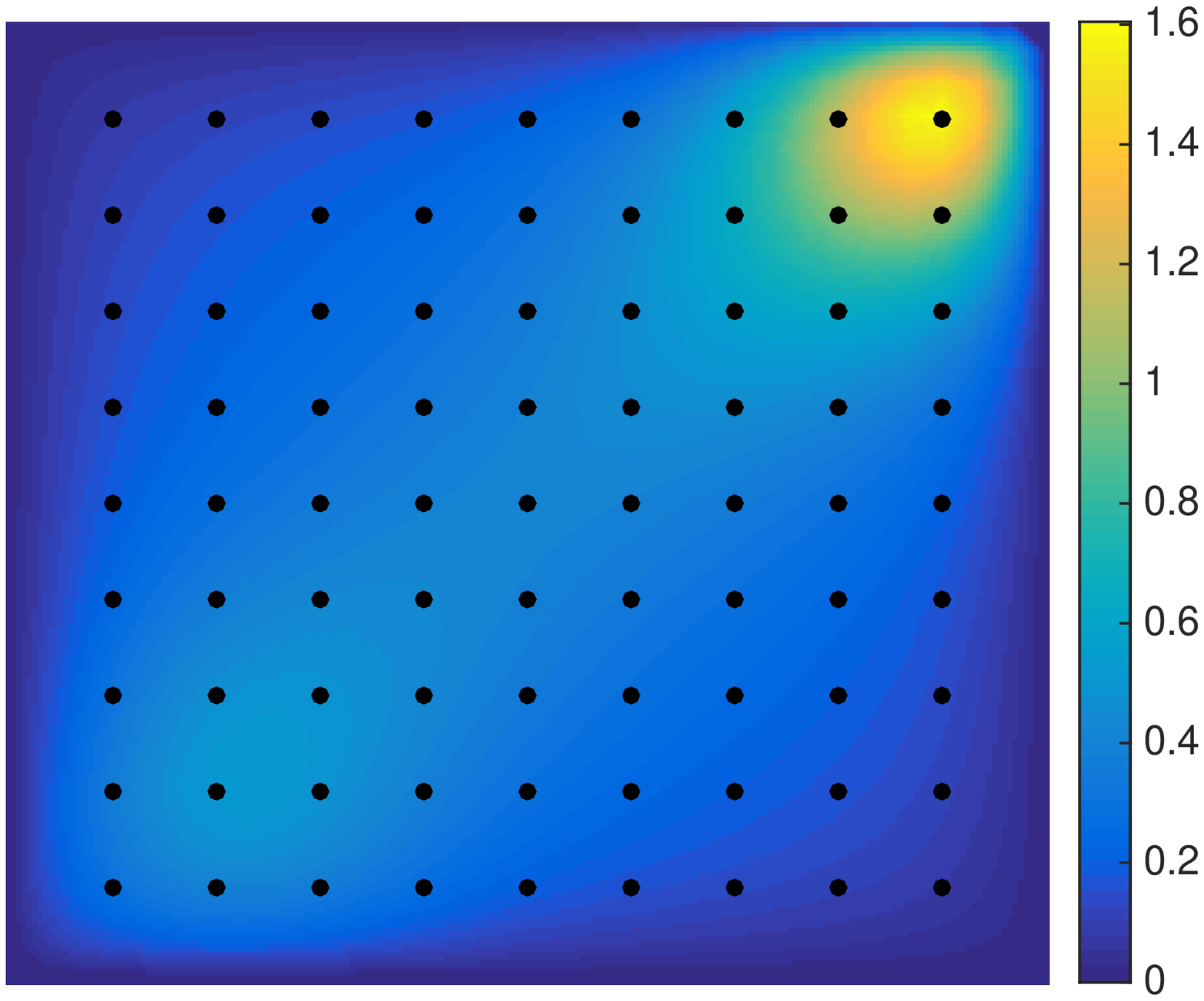}
  \end{overpic}
\end{center}
\caption{Example 2:  Set up. Left: the true permeability used for generating the synthetic data sets. Right: the model outputs of the permeability.  }\label{set_eg3}
\end{figure}

  \begin{figure}
\begin{center}
  \begin{overpic}[width=0.32\textwidth,trim=20 0 20 15, clip=true,tics=10]{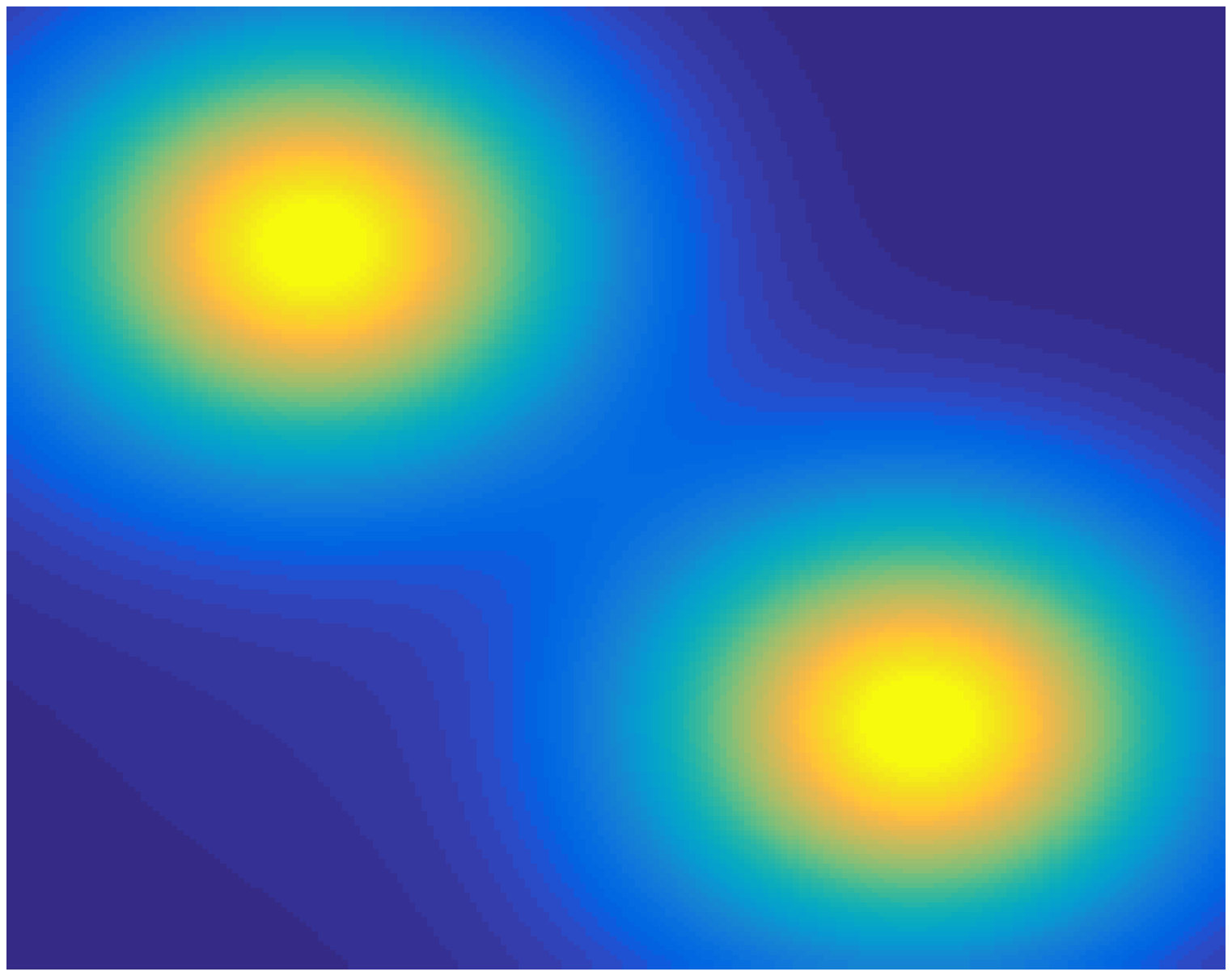}
  \end{overpic}
    \begin{overpic}[width=0.32\textwidth,trim= 20 0 20 15, clip=true,tics=10]{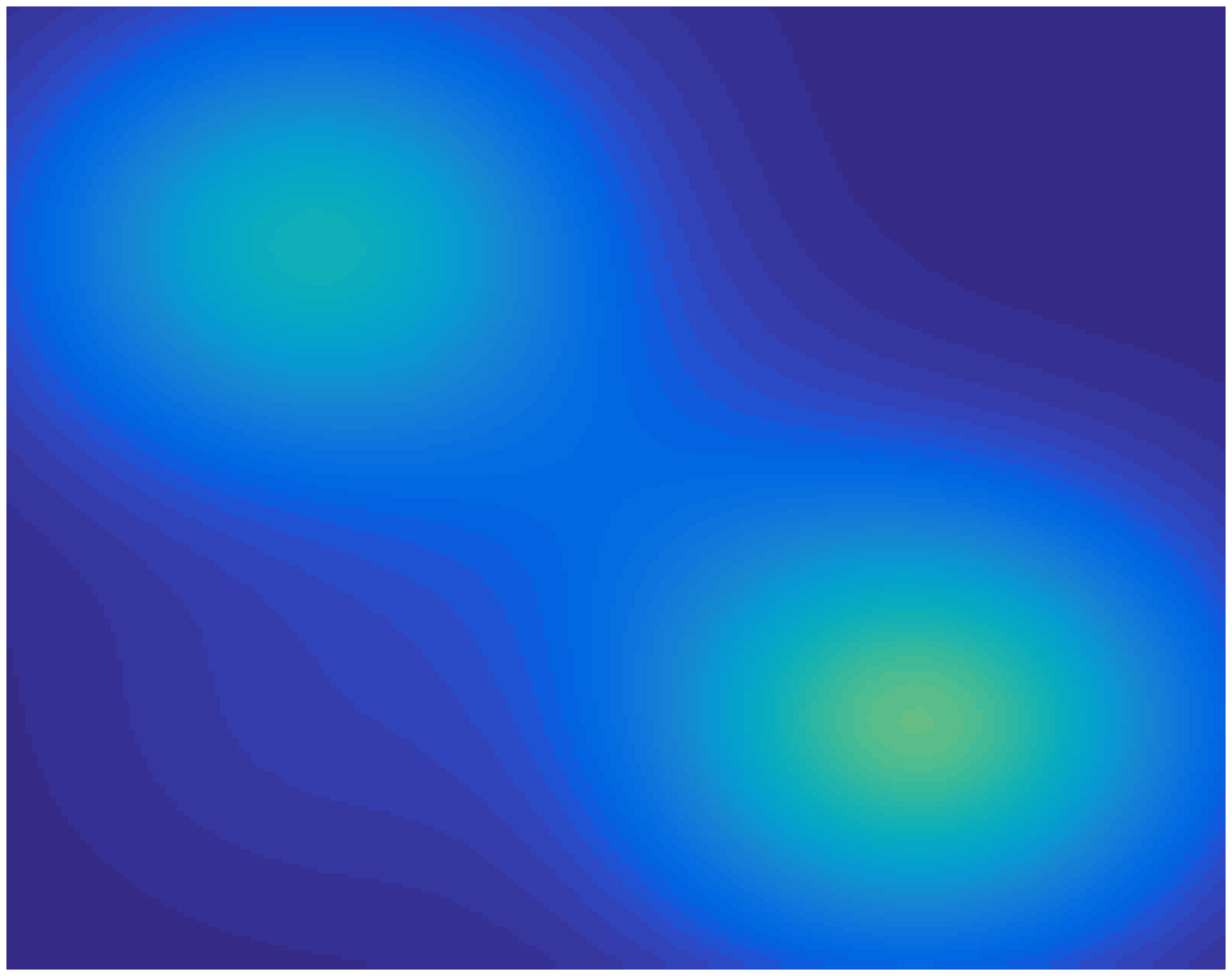}
  \end{overpic}
    \begin{overpic}[width=0.32\textwidth,trim= 20 0 20 15, clip=true,tics=10]{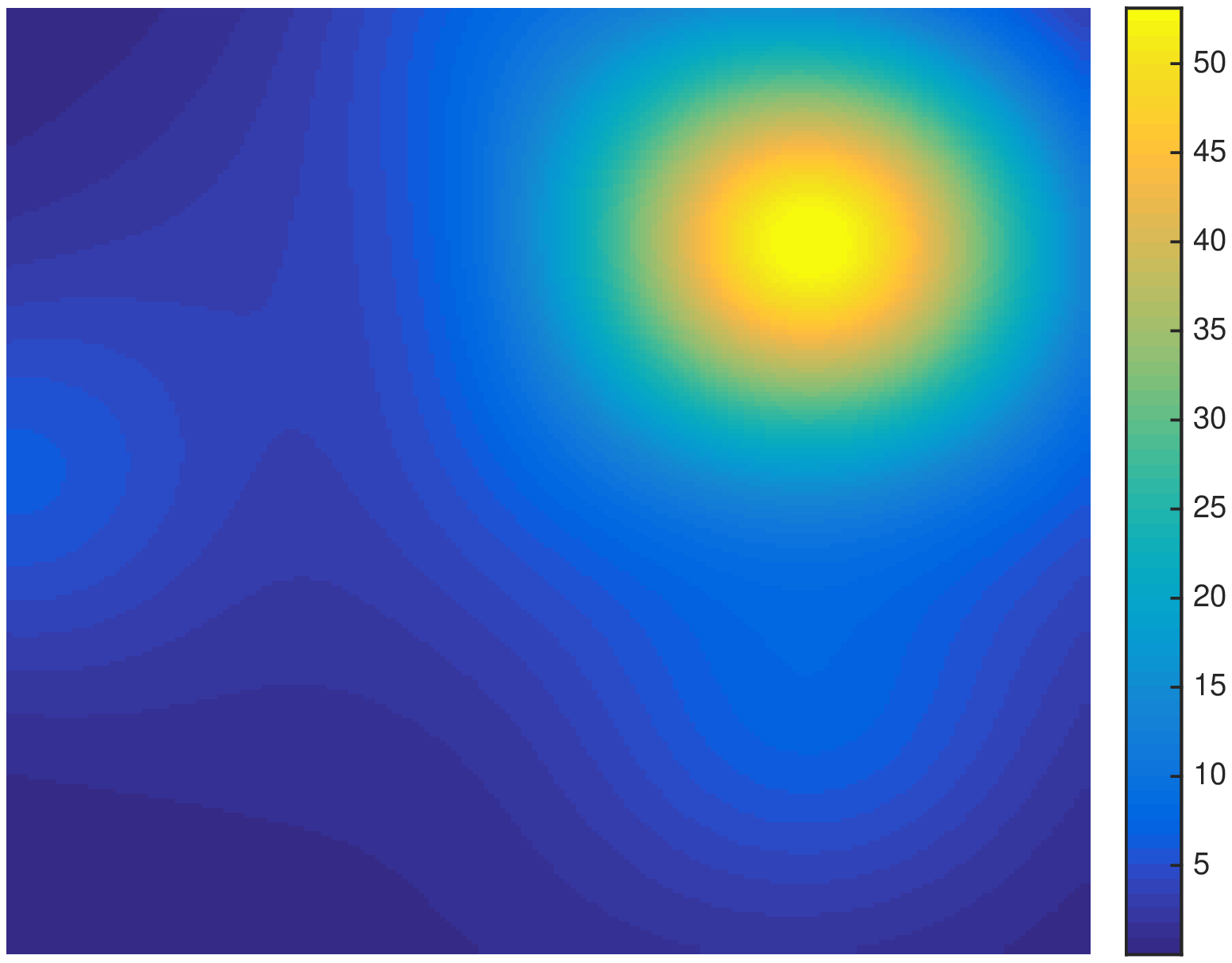}
  \end{overpic}
  \begin{overpic}[width=0.32\textwidth,trim=20 0 20 15, clip=true,tics=10]{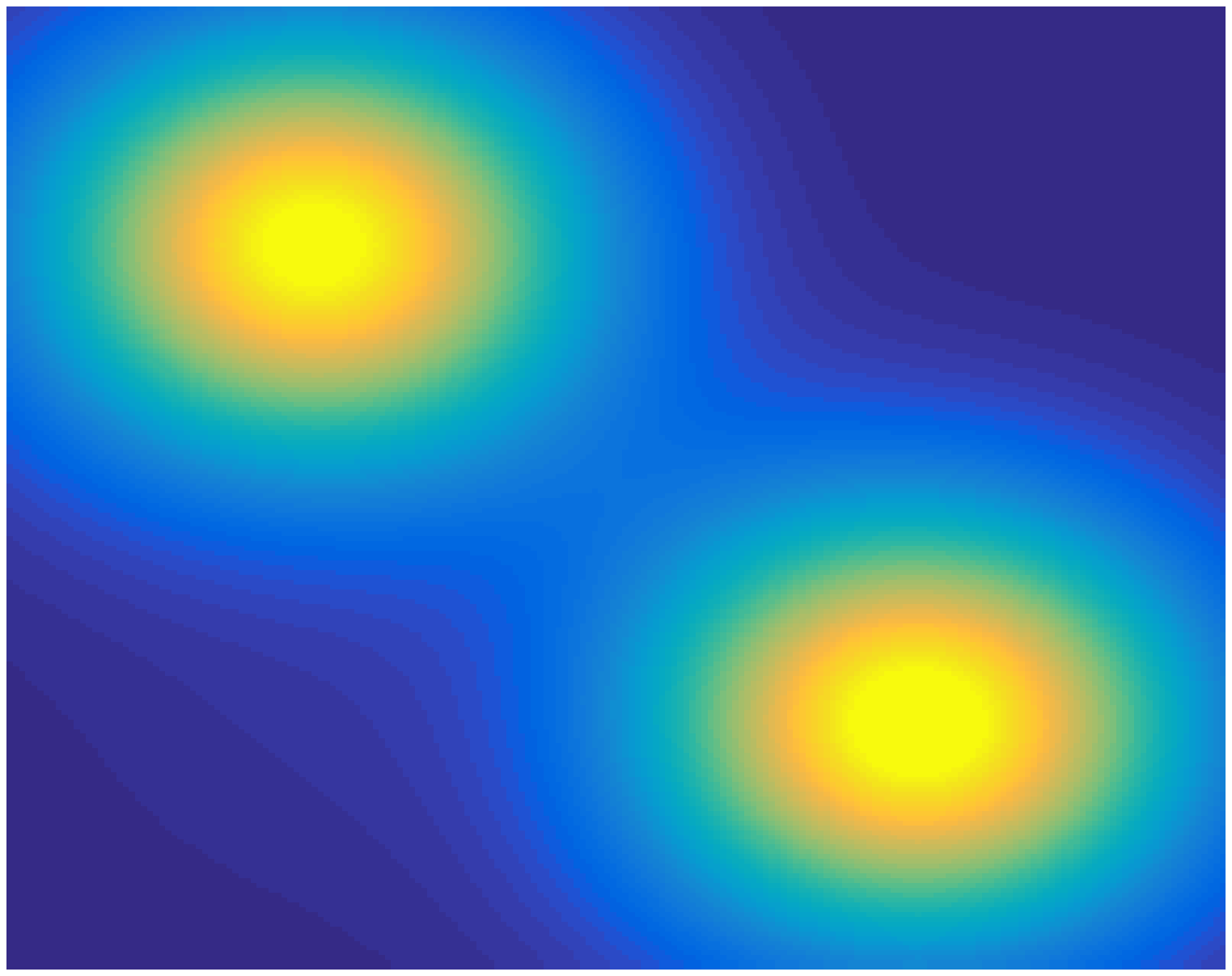}
  \end{overpic}
    \begin{overpic}[width=0.32\textwidth,trim= 20 0 20 15, clip=true,tics=10]{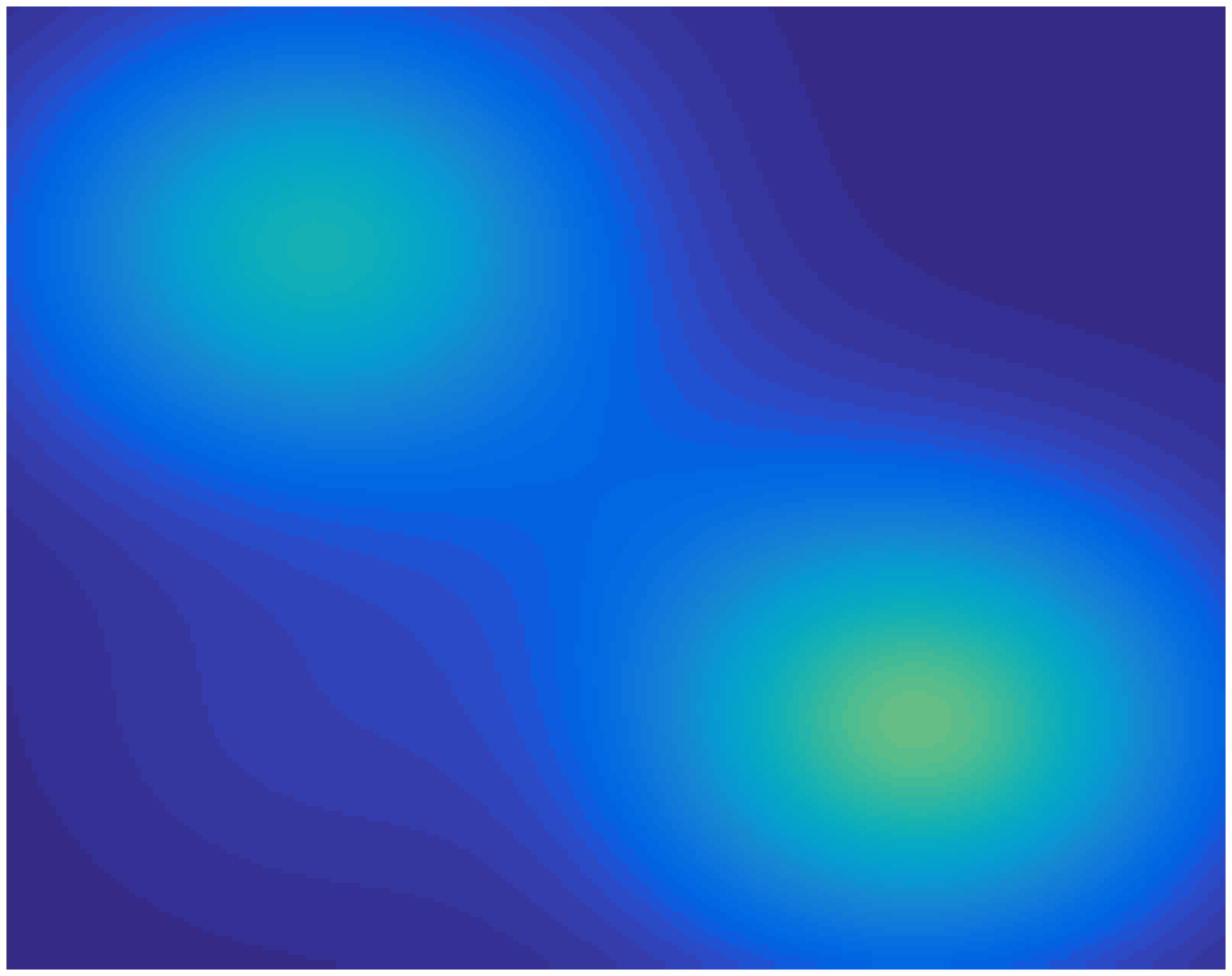}
  \end{overpic}
    \begin{overpic}[width=0.32\textwidth,trim= 20 0 20 15, clip=true,tics=10]{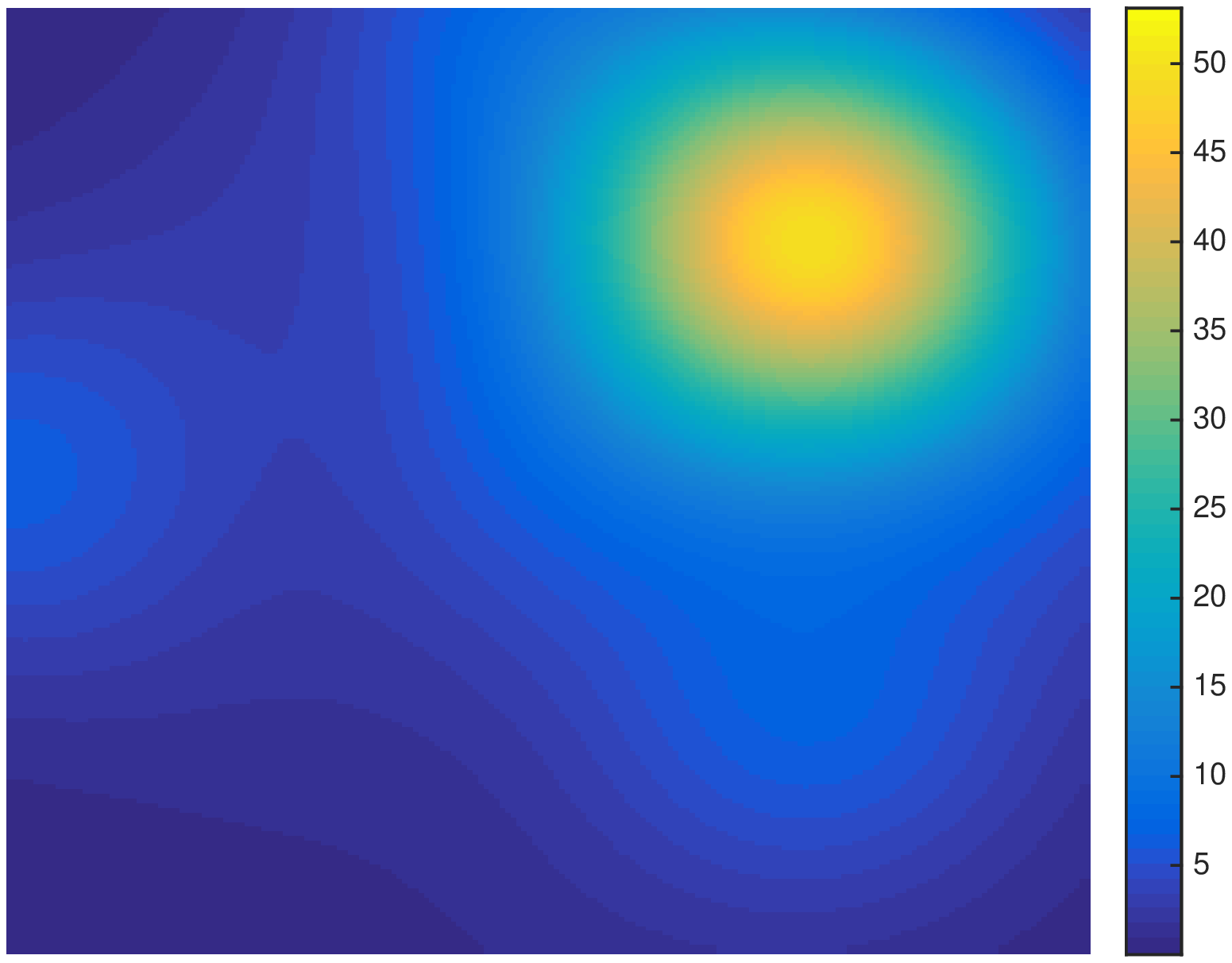}
  \end{overpic}
\end{center}
\caption{Example 2:  (Left column) Conditional mean arising from full model. (Middle column)  Conditional mean arising from prior-based PC model ($N=7$). (Right column) Conditional mean arising from  prior-based PC model ($N=2$). From top to bottom, the relative noise  level $\delta$ is $0.01,0.05$ respectively.}\label{prior_sol_eg2}
  \end{figure}

  \begin{figure}
\begin{center}
  \begin{overpic}[width=0.32\textwidth,trim=20 0 20 15, clip=true,tics=10]{u_D_s01_eg2.eps}
  \end{overpic}
    \begin{overpic}[width=0.32\textwidth,trim= 20 0 20 15, clip=true,tics=10]{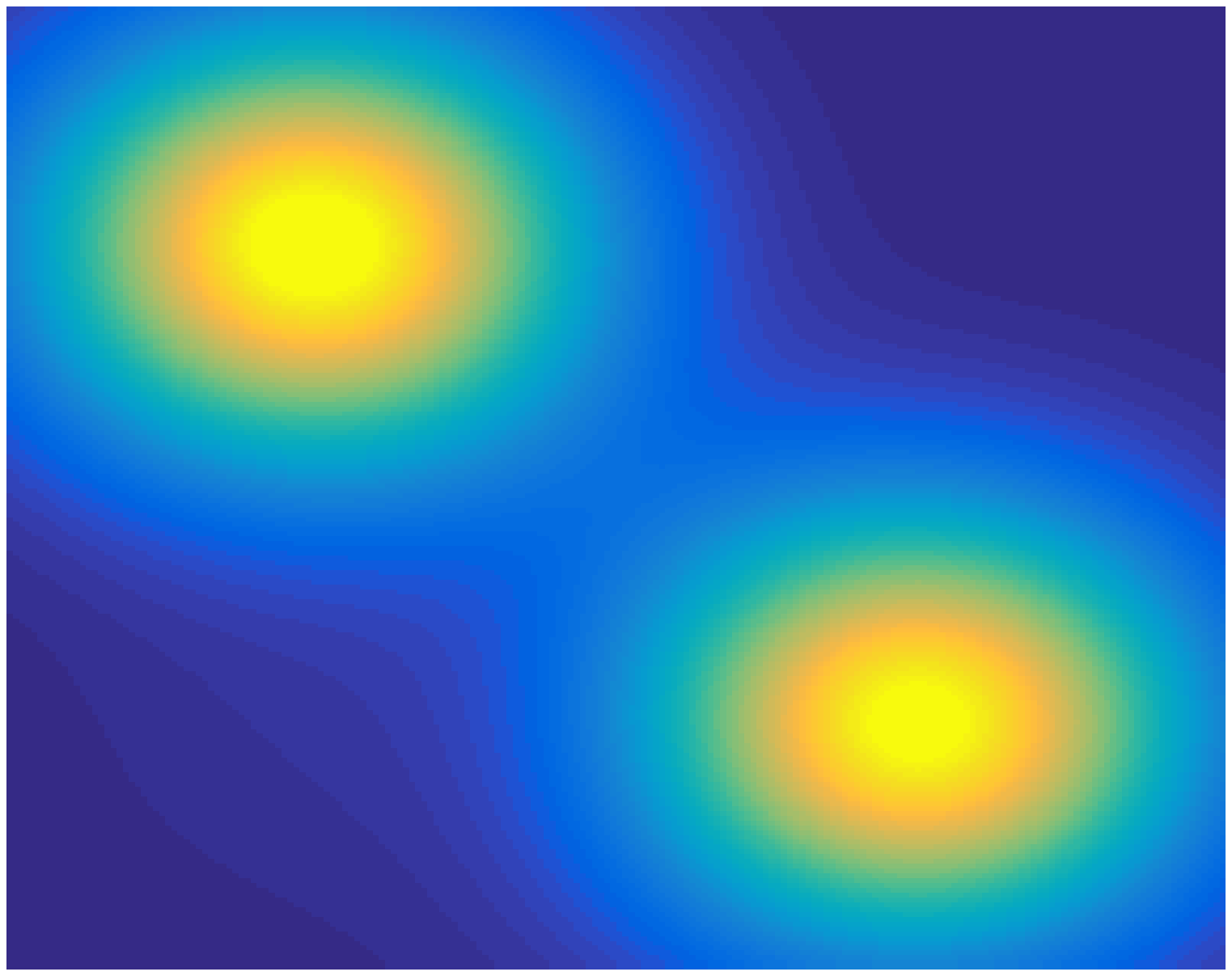}
  \end{overpic}
    \begin{overpic}[width=0.32\textwidth,trim= 20 0 20 15, clip=true,tics=10]{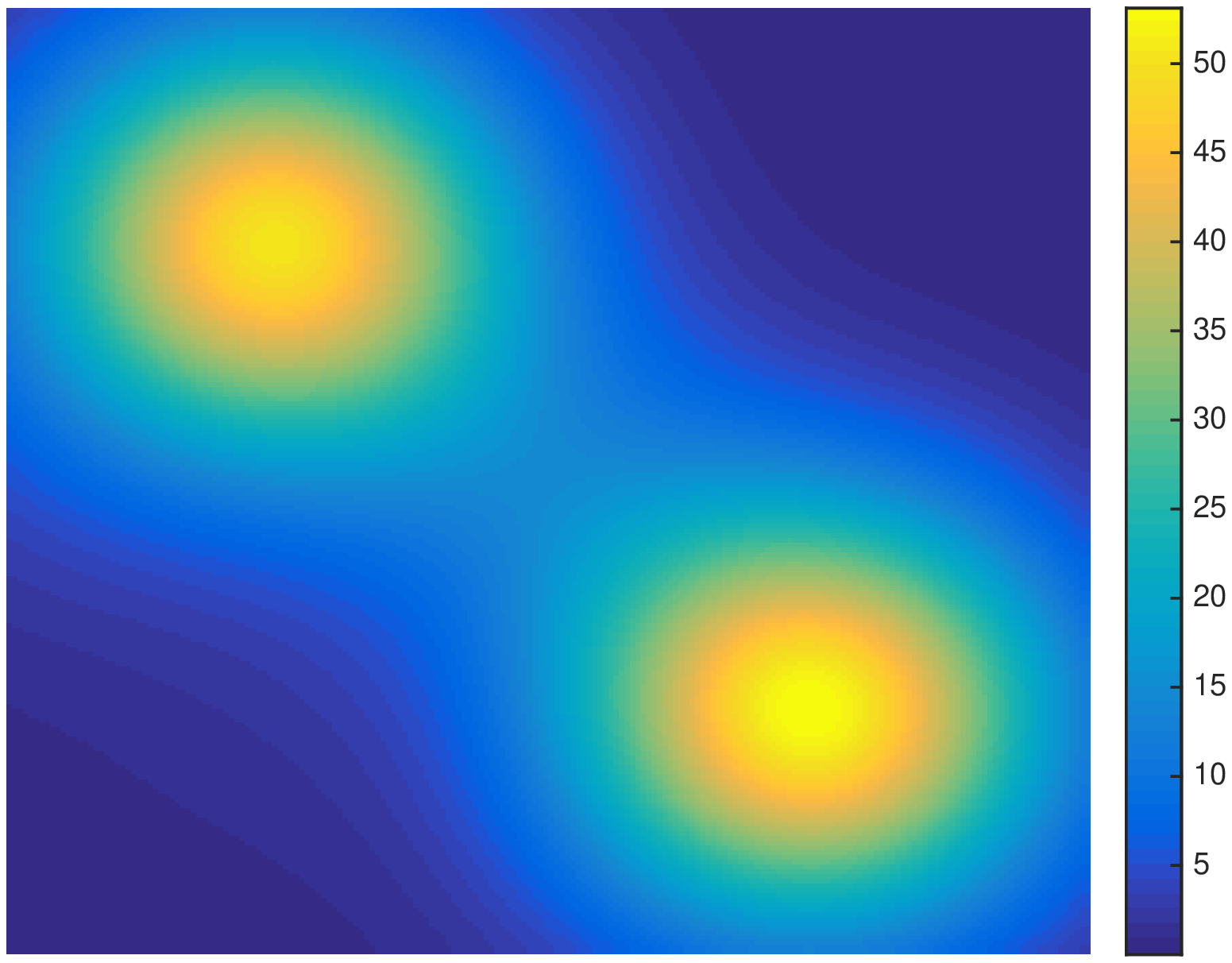}
  \end{overpic}
  \begin{overpic}[width=0.32\textwidth,trim=20 0 20 15, clip=true,tics=10]{u_D_s05_eg2.eps}
  \end{overpic}
    \begin{overpic}[width=0.32\textwidth,trim= 20 0 20 15, clip=true,tics=10]{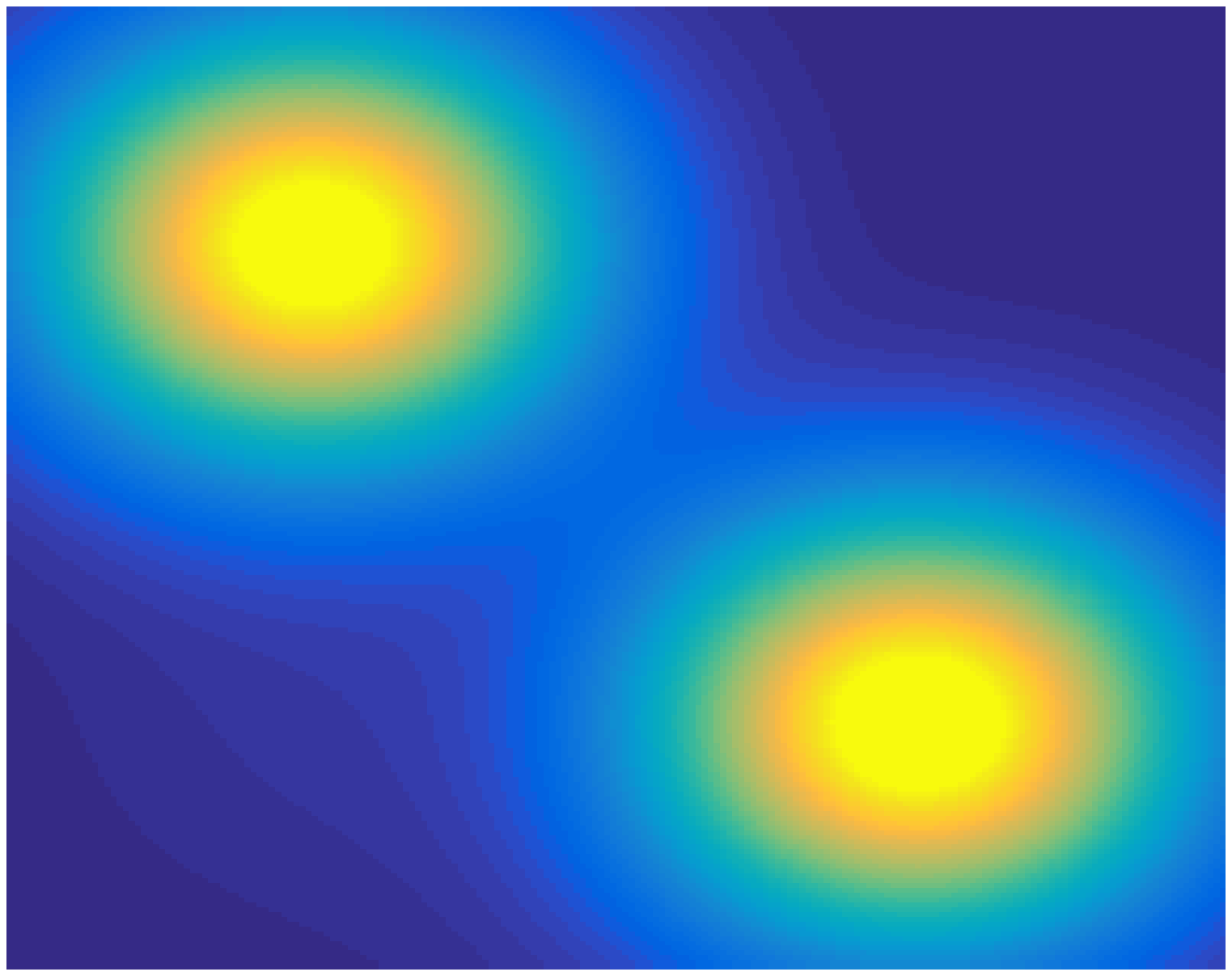}
  \end{overpic}
    \begin{overpic}[width=0.32\textwidth,trim= 20 0 20 15, clip=true,tics=10]{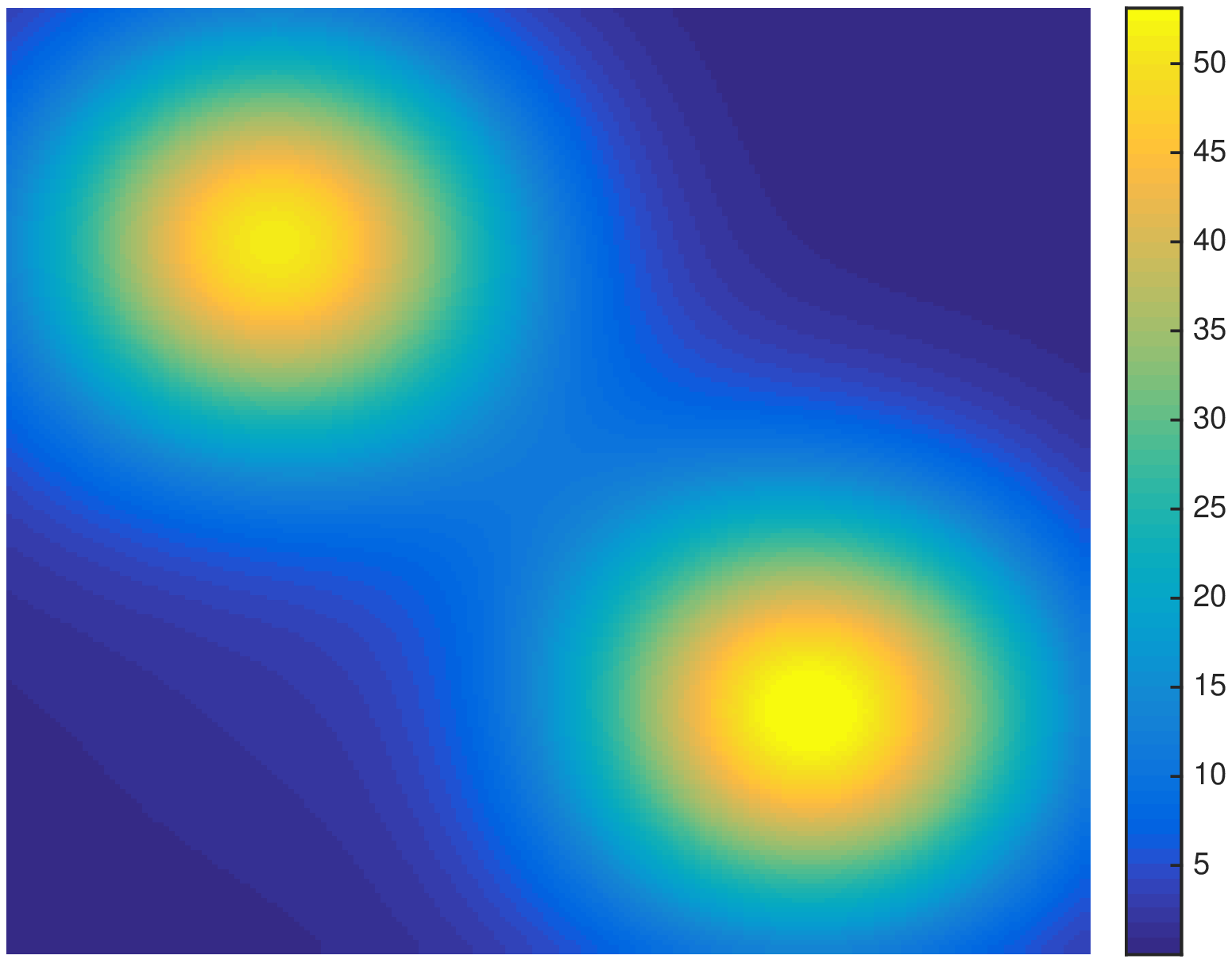}
  \end{overpic}
\end{center}
\caption{Example 2:  (Left column) Conditional mean arising from full model. (Middle column)  Conditional mean arising from AMPC model ($N=4$). (Right column) Conditional mean arising from AMPC model ($N=2$). From top to bottom, the relative noise  level $\delta$ is $0.01,0.05$ respectively.}\label{adap_sol_eg2}
  \end{figure}

The comparison results between the conventional MCMC and the  prior-based PC approach is shown in Figure \ref{prior_sol_eg2}. It is clearly seen that the prior-based PC approach admits very large approximation error, due to the fact that the parameter is far away from what is assumed in the prior. The CPU time of evaluating the conventional MCMC is 1492.2s (see Table 4.3), while the CPU time of prior-based PC method with $N=2$ is about 11.4s. Although the a prior-based PC approach can gain the computational efficiency, the estimation accuracy cannot be guaranteed. To improve this, one can increase the PC order $N$. However, when the order increases, the cost of constructing the prior-based PC surrogate becomes increasingly expensive for this high dimensional problem. To see this, consider the case with $N=7,$ one requires $28,880$ offline model evaluations -- resulting in an offline CPU time 1246.7s.

Similar comparison results between the conventional MCMC and the AMPC approach are shown in Figure \ref{adap_sol_eg2}. It is not surprising that even with a lower PC order $N=2,$ the AMPC approach admits a rather accurate result. As shown in Figure \ref{adap_sol_eg2}, the conditional means obtained by the conventional MCMC and the AMPC approach agree very well. This confirms the accuracy of the AMPC approach.

In Table \ref{eg2_time}, we also learn that building a prior-based PC surrogate with $N=2$ (resp. $N=7$) requires an offline CPU time of 2.9s (resp. 1246.7s), whereas its online evaluation requires 11.4s (resp. 241.2s). This reflects the major drawback for the prior-based PC approach for high-dimensional problems: the overall CPU time increase fast with respect to the polynomial order $N.$  In contrast, for the AMPC approach with $N=2$, the offline and online CPU times are 2.9s and 38.7s, respectively, meaning that the  AMPC approach can provide with much more accurate results, yet with less computational time. This indicates that the AMPC approach is more efficient than the prior-based PC approach for solving high dimension problems.

\section{Summary} \label{sec:summary}

\textcolor{black}{We have developed an efficient adaptive multi-fidelity approach for solving  Bayesian  inverse problems when the forward model evaluation is computationally expensive. The algorithm introduces a multi-fidelity PC model to approximate the computationally expensive forward model and refines the approximation incrementally.  Since the high-fidelity model evaluations are only needed to construct the lower order multi-fidelity PC, the computational cost can be significantly reduced.  Further, the algorithm refines the multi-fidelity model over a sequence of samples adaptively determined from data so that the approximation can eventually concentrate on the posterior distribution. In typical inference problems, when the data are informative, the adaptive approach can lead to significant gains in efficiency and accuracy over previous prior-based PC methods.  A rigorous error analysis of the algorithm has been discussed. In particular,  we analyze the error bounds on the Kullback-Leibler distance between the true posterior distribution and the approximation based on surrogate model. }

\textcolor{black}{
The performance of the proposed strategy are demonstrated on two nonlinear inverse problems: estimating the source location for time-fractional diffusion equations and inferring the permeability field for elliptic PDEs. Numerical experiments confirm that the proposed AMPC methods are efficient (as measured by the number of high-fidelity model evaluations)  and accurate (as measured by posterior mean and information divergence from the exact posterior) in comparison with prior-based PC approach. When compared with the conventional MCMC, AMPC is still favorable because of it can reduce the cost of each posterior evaluation by server orders of magnitude. Since there is no conflict between the AMPC methods and other varieties of Monte Carlo scheme; such as Sequential Monte Carlo (SCM) or hybrid Monte Carlo (HMC) algorithm, the two ideas can work together and further improve the robustness of AMPC methods. We leave this to our next exploration.}

\section*{Acknowledgment}
The authors would like to thank Tiangang Cui for many comments and discussions.

\appendix
\textcolor{black}{
\section{ Proof of Theorem \ref{t1}} 
This section is devoted to proving our main theoretical result. }

\textcolor{black}{
In order to proof the Theorem \ref{t1}, we first define the potential $\Psi$ given by
\begin{equation}\label{potential}
\Psi(z;d)=\frac{\|u^H(z)-d\|^2}{2\sigma^2},
\end{equation}
where $\sigma$ is the known standard deviation of the noise $e$.
Using the approximation $u^L$ of the high-fidelity model $u^H$, we can define the approximation $\widetilde{\Psi}_N(z;d)$ given by $\widetilde{\Psi}_N(z;d)=\frac{\|u^L(z)-d\|^2}{2\sigma^2}$. We also define the normalizing constants $\gamma$ and $\widetilde{\gamma}_N$ as  $$\gamma=\int_{\Gamma} \mathcal{L}(z)\pi(z)dz,\,\,\,\,\widetilde{\gamma}_N=\int_{\Gamma} \widetilde{\mathcal{L}}_N(z)\pi(z)dz,$$
where $$\mathcal{L}(z)=\exp(-\Psi(z;d)), \widetilde{\mathcal{L}}_N(z)=\exp(-\widetilde{\Psi}_N(z;d)).$$ 
}

\textcolor{black}{
We have the following lemma concerning the approximation $\widetilde{\Psi}_N(z;d)$.
\begin{lemma}[\cite{Cui2014data}]\label{lemmaA1}
Assume the functions $u^H$ and $u^L$ satisfy Assumption \ref{a1} uniformly in $N$. For a given $\epsilon>0$, \\
 \noindent (i) there exist a constant $K>0$ such that $|\Psi-\widetilde{\Psi}_N|\leq K \epsilon, \forall z\in \Gamma_N(\epsilon).$\\
 \noindent (ii) there exist  constants $k_1>0$ and $k_2>0$ such that $$|\gamma-\widetilde{\gamma}_N|\leq k_1\epsilon+k_2 \mu\big(\Gamma^{\perp}_N(\epsilon)\big).$$
\end{lemma}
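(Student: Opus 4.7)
The plan is to prove (i) by a direct polarization of the squared norms in the definition of the potentials, and then to deduce (ii) by splitting the integral of $|\mathcal{L}-\widetilde{\mathcal{L}}_N|\pi$ into a contribution from the feasible set $\Gamma_N(\epsilon)$ and one from its complement, using (i) on the former and a crude boundedness argument on the latter.

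For (i), the first step is to write
\begin{equation*}
\Psi(z;d)-\widetilde{\Psi}_N(z;d)=\frac{1}{2\sigma^{2}}\bigl(\|u^H(z)-d\|^{2}-\|u^L(z)-d\|^{2}\bigr)
\end{equation*}
and apply the polarization identity $\|a\|^{2}-\|b\|^{2}=\langle a-b,\,a+b\rangle$ with $a=u^H(z)-d$ and $b=u^L(z)-d$. This factors the right-hand side as $\langle u^H(z)-u^L(z),\,u^H(z)+u^L(z)-2d\rangle$, so Cauchy--Schwarz gives
\begin{equation*}
|\Psi-\widetilde{\Psi}_N|\le\frac{1}{2\sigma^{2}}\,\|u^H(z)-u^L(z)\|\;\|u^H(z)+u^L(z)-2d\|.
\end{equation*}
Assumption \ref{a1} (applied uniformly in $N$ to both $u^H$ and $u^L$) bounds the second factor by $2(C_H+\|d\|)$ everywhere on $\Gamma$. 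On $\Gamma_N(\epsilon)$ the first factor is at most $\sqrt{n_d}\,\epsilon$ by equivalence of the $\ell^{2}$ and $\ell^{\infty}$ norms in $\mathbb{R}^{n_d}$. Combining these yields (i) with the explicit constant $K=\sqrt{n_d}(C_H+\|d\|)/\sigma^{2}$.

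For (ii), I would split
\begin{equation*}
|\gamma-\widetilde{\gamma}_N|\;\le\;\int_{\Gamma_N(\epsilon)}|\mathcal{L}-\widetilde{\mathcal{L}}_N|\pi\,dz+\int_{\Gamma_N^{\perp}(\epsilon)}|\mathcal{L}-\widetilde{\mathcal{L}}_N|\pi\,dz\;=:\;I_1+I_2.
\end{equation*}
Since $\Psi,\widetilde{\Psi}_N\ge 0$, the map $x\mapsto e^{-x}$ is $1$-Lipschitz on the relevant range, so $|\mathcal{L}-\widetilde{\mathcal{L}}_N|\le|\Psi-\widetilde{\Psi}_N|$; combined with (i) and $\int_{\Gamma_N(\epsilon)}\pi\,dz\le 1$ this gives $I_1\le K\epsilon$, which is the term $k_1\epsilon$ with $k_1=K$. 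For $I_2$, use the pointwise bound $|\mathcal{L}-\widetilde{\mathcal{L}}_N|\le\mathcal{L}+\widetilde{\mathcal{L}}_N$ (both likelihoods lie in $(0,1]$) and the identity $\int_{\Gamma_N^{\perp}(\epsilon)}\mathcal{L}\,\pi\,dz=\gamma\,\mu(\Gamma_N^{\perp}(\epsilon))$ that falls straight out of the definition of the posterior; the symmetric term for $\widetilde{\mathcal{L}}_N$ can be absorbed by writing $\widetilde{\mathcal{L}}_N=\mathcal{L}+(\widetilde{\mathcal{L}}_N-\mathcal{L})$ and using Assumption \ref{a1} to bound the surrogate normalizing constant $\widetilde{\gamma}_N$ by a uniform constant. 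Collecting gives $I_2\le k_2\,\mu(\Gamma_N^{\perp}(\epsilon))$ for a suitable $k_2$.

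The main obstacle is the treatment of $I_2$: by construction the complement $\Gamma_N^{\perp}(\epsilon)$ is precisely the region where the surrogate is inaccurate, so no quantitative rate in $\epsilon$ is available there and one must settle for the qualitative smallness of the posterior mass of that set. A secondary subtlety, easy to overlook, is the choice of measure in the bound: the most direct estimate of $\int_{\Gamma_N^{\perp}}\widetilde{\mathcal{L}}_N\pi\,dz$ produces a \emph{prior}-measure term on $\Gamma_N^{\perp}(\epsilon)$, and Assumption \ref{a1}, which provides a uniform two-sided control on both $\gamma$ and $\widetilde{\gamma}_N$, is exactly what permits the conversion to the posterior measure $\mu$ appearing in the lemma.
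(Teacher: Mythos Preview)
The paper does not actually prove this lemma: it is stated with a citation to \cite{Cui2014data} and then used directly in the proof of Theorem~\ref{t1}. So there is no ``paper's own proof'' to compare against; what you have written is essentially the standard argument one finds in that reference, and part~(i) together with your treatment of $I_1$ in part~(ii) are correct and cleanly done.

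There is one wrinkle in your handling of $I_2$. The step ``absorb the symmetric term by writing $\widetilde{\mathcal{L}}_N=\mathcal{L}+(\widetilde{\mathcal{L}}_N-\mathcal{L})$'' is circular as stated: the resulting integral $\int_{\Gamma_N^{\perp}}(\widetilde{\mathcal{L}}_N-\mathcal{L})\pi\,dz$ is exactly the quantity $I_2$ you are trying to bound, so nothing is gained. The correct route is the one you yourself describe in your closing paragraph: bound $\widetilde{\mathcal{L}}_N\le 1$ to obtain the prior mass $\int_{\Gamma_N^{\perp}}\pi\,dz$, and then convert this to the posterior mass $\mu(\Gamma_N^{\perp}(\epsilon))$ using the uniform lower bound $\mathcal{L}(z)\ge e^{-C_\Psi}$ (equivalently, $\pi\le \gamma e^{C_\Psi}\pi^d$), which follows from Assumption~\ref{a1}. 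Your final paragraph identifies this mechanism precisely, so the proof is complete once you replace the circular sentence with that argument.
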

}

\textcolor{black}{
We are now ready to prove bounds on the approximation error in the posterior distributions, i.e. Theorem \ref{t1}.
}
\begin{proof} 
\textcolor{black}{
Since $\sup_{z\in \Gamma} |\Psi(z;d)|$ is bounded when $\sup_{z\in \Gamma} \|u^H(z)\|$ is bounded, we have
\begin{equation*}
1\geq \gamma= \int_{\Gamma}\exp(- \Psi(z;d))\pi(z)dz \geq\int_{\Gamma}\exp(-\sup_{z\in \Gamma} |\Psi(z;d)|)\pi(z)dz\geq \exp(-C_{\Psi}),
\end{equation*}
where $C_{\Psi}=\sup_{z\in \Gamma} |\Psi(z;d)|.$
We have an analogous  bound for $\widetilde{\gamma}_N$: $1\geq\widetilde{\gamma}_N \geq \exp(-C_{\widetilde{\Psi}_N})$.
}

\textcolor{black}{
Similar to the derivation in \cite{Yan+Zhang2017IP}, we have
\begin{eqnarray*}
D_{KL}(\widetilde{\pi}_N^d||\pi^d)&\leq& D_{KL}(\widetilde{\pi}_N^d||\pi^d)+D_{KL}(\pi^d||\widetilde{\pi}_N^d)\\
&\leq&\frac{\gamma-\widetilde{\gamma}_N}{\gamma\widetilde{\gamma}_N}\int_{\Gamma} \mathcal{L}(\Psi-\widetilde{\Psi}_N)\pi(z)dz+\frac{1}{\widetilde{\gamma}_N}\int_{\Gamma}|\widetilde{\mathcal{L}}_N-\mathcal{L}||\Psi-\widetilde{\Psi}_N|\pi(z)dz\\
&:=&I_1+I_2,
\end{eqnarray*}
where
$$I_1=\frac{\gamma-\widetilde{\gamma}_N}{\gamma\widetilde{\gamma}_N}\int_{\Gamma} \mathcal{L}(\Psi-\widetilde{\Psi}_N)\pi(z)dz$$
and
$$I_2=\frac{1}{\widetilde{\gamma}_N}\int_{\Gamma}|\widetilde{\mathcal{L}}_N-\mathcal{L}||\Psi-\widetilde{\Psi}_N|\pi(z)dz.$$
}

\textcolor{black}{
For the first term, since $\Psi(z;d)$  defined in (\ref{potential}) is non-negative, we have
\begin{eqnarray*}
&&\int_{\Gamma}  \mathcal{L}|\Psi-\widetilde{\Psi}_N|\pi(z)dz=\int_{\Gamma} \exp(-\Psi)|\Psi-\widetilde{\Psi}_N|\pi(z)dz\\
&&\leq \int_{\Gamma_N(\epsilon)} |\Psi-\widetilde{\Psi}_N|\pi(z)dz+\int_{\Gamma^{\perp}_N(\epsilon)}  \gamma |\Psi-\widetilde{\Psi}_N|\pi^d(z)dz\\
&&\leq c_1 \epsilon+ \sup_{\Gamma}(|\Psi-\widetilde{\Psi}_N|)\int_{\Gamma^{\perp}_N(\epsilon)}  \pi^d(z)dz\\
&&\leq c_1 \epsilon+c_2\mu\big(\Gamma^{\perp}_N(\epsilon)\big),
\end{eqnarray*}
for constants $c_1$ and $c_2$, independent of  $N$.
}

\textcolor{black}{
Combine the above results and Lemma \ref{lemmaA1}, we have
\begin{eqnarray*}
I_1&=\frac{\gamma-\widetilde{\gamma}_N}{\gamma\widetilde{\gamma}_N}\int_{\Gamma} \mathcal{L}(\Psi-\widetilde{\Psi}_N)\pi(z)dz\\
 &\leq \frac{1}{\exp(-C_\Psi-C_{\widetilde{\Psi}_N})} \Big(K_3 \epsilon+K_4\mu\big(\Gamma^{\perp}_N(\epsilon)\big)\Big)^2.
\end{eqnarray*}
}

\textcolor{black}{
Furthermore, we use the lower bound of $\widetilde{\gamma}_N$, together with the local Lipschitz continuity of the exponential function to bound
\begin{eqnarray*}
I_2&=&\frac{1}{\widetilde{\gamma}_N}\int_{\Gamma}|\widetilde{\mathcal{L}}_N-\mathcal{L}||\Psi-\widetilde{\Psi}_N|\pi(z)dz\\
&\leq&\frac{1}{\exp(-C_{\widetilde{\Psi}_N})}\int_{\Gamma_N(\epsilon)}|\Psi-\widetilde{\Psi}_N|^2\pi(z)dz+\frac{\gamma}{\widetilde{\gamma}_N}\int_{\Gamma^{\perp}_N(\epsilon)}|1-\exp(\Psi-\widetilde{\Psi}_N)||\Psi-\widetilde{\Psi}_N|\pi^d(z)dz\\
&\leq&c_3\epsilon^2+c_4\mu\big(\Gamma^{\perp}_N(\epsilon)\big),
\end{eqnarray*}
for constants $c_3$ and $c_4$, independent of  $N$.
}

\textcolor{black}{
Note that the bounds on $I_1$ and $I_2$ from above are independent of $N$, we have
\begin{eqnarray*}
D_{KL}(\widetilde{\pi}_N^d||\pi^d)&\leq&C\Big(K_3 \epsilon+K_4\mu\big(\Gamma^{\perp}_N(\epsilon)\big)\Big)^2+c_3\epsilon^2+c_4\mu\big(\Gamma^{\perp}_N(\epsilon)\big) \\
&\leq& \Big(K_1 \epsilon+K_1\mu\big(\Gamma^{\perp}_N(\epsilon)\big)\Big)^2
\end{eqnarray*}
with  constants $K_1$ and $K_2$ independent of $N$, which ends the proof.
}
\end{proof}

\bibliographystyle{plain}
\bibliography{bayesian}

\end{document}